\documentclass[12pt,twoside]{amsart}
\usepackage{amssymb,amsmath,amscd,enumerate,verbatim,xcolor,mathtools,fullpage}

\usepackage[top=3.5cm, bottom=2.5cm, left=3.0cm, right=3cm]{geometry}
\usepackage{color,colortbl, fancyhdr, wrapfig}
\usepackage{multirow}
\usepackage{diagbox}
\usepackage[unicode]{hyperref}
\usepackage{indentfirst}
\usepackage{tikz}
\usepackage{array}
\usetikzlibrary{calc}
\usepackage{mathrsfs}
\usepackage{graphicx}
\usepackage{longtable}
\usepackage{cleveref}
\usepackage[all,cmtip]{xy}

\newtheorem{thm}{Theorem}[section]
\newtheorem{theorem}[thm]{Theorem}
\newtheorem{lemma}[thm]{Lemma}
\newtheorem{proposition}[thm]{Proposition}
\newtheorem{corollary}[thm]{Corollary}
\theoremstyle{definition}
\newtheorem{definition}[thm]{Definition}
\newtheorem{remark}[thm]{Remark}

\newtheorem{question}[thm]{Question}

\newtheoremstyle{mystyle}
{7pt}
{7pt}
{}
{}
{\scshape}
{:}
{.5em}
{}%
\theoremstyle{mystyle}
\newtheorem{claim}{Claim}

\newcommand{\brb}[1]{\left\{#1\right\}}
\newcommand{\abs}[1]{\left| #1\right|}
\newcommand{\tail}[1]{\mathrm{Tail}(#1)}
\newcommand{\ceil}[1]{ \left \lceil #1 \right \rceil}
\newcommand{\flo}[1]{\left \lfloor #1 \right\rfloor}

\hypersetup{
	colorlinks=true,       
	linkcolor=red,          
	citecolor=blue,        
	filecolor=magenta,      
	urlcolor=cyan           
}

\begin{document}

\title[Independence polynomials and WLP for tadpole graphs]
{Independence polynomials and the weak Lefschetz property for tadpole graphs}

\author[T.Q. Hoa]{Tran Quang Hoa}
\address{University of Education, Hue University,  34 Le Loi St., Hue City, Vietnam.}
\email{tranquanghoa@hueuni.edu.vn}

\author[N.D. Phuoc]{Nguyen Duy Phuoc}
\address{University of Education, Hue University, 34 Le Loi St., Hue City, Vietnam}
\email{ndphuoc@dhsphue.edu.vn}

\author[T.N.T. Son]{Tran Nguyen Thanh Son}
\address{University of Education, Hue University, 34 Le Loi St., Hue City, Vietnam}
\email{tntson@dhsphue.edu.vn}

\date{\today}

\begin{abstract}
	Let $T_{m,n}$ be the tadpole graph obtained by joining a cycle
	$C_m$ to a path $P_n$ by a bridge. We prove that the independence
	polynomial of every tadpole graph is unimodal and establish
	sharp bounds for its mode. The unimodality
	result follows from a general criterion for graphs obtained
	by attaching a path to a fixed vertex. Over a field of
	characteristic zero, we also give a complete classification
	of the pairs $(m,n)$ for which the Artinian algebra defined
	by the edge ideal of $T_{m,n}$ together with the squares of
	all variables has the weak Lefschetz property.
\end{abstract}

\makeatletter
\@namedef{subjclassname@2020}{%
	\textup{2020} Mathematics Subject Classification}
\makeatother

\subjclass[2020]{13E10, 13F20, 13F55, 05C31}
\keywords{Artinian algebras, edge ideals, independence polynomials, tadpole graphs, weak Lefschetz property}

\maketitle
\section{Introduction}
Let \(A=\bigoplus_{i=0}^{D}[A]_i\) be a standard graded Artinian algebra over a field \(\Bbbk\). We say that \(A\) has the \emph{weak Lefschetz property} (WLP) if there exists a linear form \(\ell\in[A]_1\) such that, for every \(i\), the multiplication map
\[
\times\ell:[A]_i\longrightarrow[A]_{i+1}
\]
is either injective or surjective. Furthermore, \(A\) is said to have the \emph{strong Lefschetz property} (SLP) if there exists a linear form \(\ell\) such that, for every pair of integers \(i\geq 0\) and \(j\geq 1\), the multiplication map
\[
\times\ell^j:[A]_i\longrightarrow[A]_{i+j}
\]
has maximal rank.

The study of Lefschetz properties has become a central topic in commutative algebra because of its rich interactions with many other areas of mathematics. Among these are the theory of partially ordered sets through the Dilworth number and representation theory via Schur--Weyl duality. We refer the reader to \cite{HMMNWW13,MN13} for surveys and further references.

The Lefschetz properties of Artinian \(\Bbbk\)-algebras arising from monomial ideals have been studied extensively; see, for example,
\cite{AB20,AN20,DN24,GLN22,H24,K25, MMO13,MM16,MNS20,MMN11,PT23}
and the references therein.

In the present paper, we restrict our attention to a particular family of Artinian algebras defined by quadratic monomial relations, which can be encoded by simple graphs. Let \(G=(V,E)\) be a simple graph with vertex set
\(V=\{1,2,\ldots,n\}\), and let
\[
R=\Bbbk[x_1,\ldots,x_n]
\]
be the standard graded polynomial ring over \(\Bbbk\). The \emph{edge ideal} of \(G\) is
\[
I(G)=\bigl(x_i x_j \mid \{i,j\}\in E\bigr)\subset R.
\]
Associated to \(G\), we consider the Artinian algebra
\[
A(G)=\frac{R}{(x_1^2,\ldots,x_n^2)+I(G)}.
\]
We shall refer to this algebra as the \emph{Artinian algebra associated to \(G\)}. This leads naturally to the following problem.

\begin{question}
	\label{quest_WLPofAG}
	For which graphs \(G\) does the algebra \(A(G)\) have the WLP or the SLP? When \(A(G)\) fails to have one of these Lefschetz properties, in which degrees do the corresponding multiplication maps fail to have maximal rank?
\end{question}
The Lefschetz properties of $A(G)$ have been investigated in \cite{HPS26,NT24b,T21} for several classes of graphs, including complete graphs, star graphs, barbell graphs, wheel graphs, paths, cycles, and lollipop graphs. Artinian algebras defined by quadratic monomial relations have also been considered in earlier and recent works by Michałek--Miró-Roig~\cite{MM16}, Migliore--Nagel--Schenck~\cite{MNS20}, Dao--Nair~\cite{DN24}, Kling~\cite{K24}, Holleben~\cite{H24,H25}, and Cooper et al.~\cite{CFHNVT24}.

In the present paper, we instead focus on tadpole graphs. Recall that for integers \(m\geq 3\) and \(n\geq 1\), the \emph{tadpole graph} \(T_{m,n}\) is obtained by joining a cycle graph \(C_m\) to a path graph \(P_n\) by a bridge. More precisely, we label the vertices of \(C_m\) by \(x_1,\ldots,x_m\) and the vertices of \(P_n\) by \(y_1,\ldots,y_n\), and add the bridge \(\{x_m,y_1\}\), see Figure~\ref{fig1}. 
\begin{figure}[!ht]
	\begin{tikzpicture}[
		every edge/.style = {draw=black,very thick},
		vrtx/.style args = {#1/#2}{%
			circle, draw, thick, fill=black,
			minimum size=1mm, label=#1:#2}
		]
		\node (n1) [vrtx=above/$x_2$]  at (-1,0) {};
		\node (n2) [vrtx=above/$x_1$]at (1,0)  {};
		\node (n3) [vrtx=below/$x_m$]at (2,-1.5)  {};
		\node (n4) [vrtx=below/$x_{m-1}$]at (1,-3) {};
		\node (n5) [vrtx=below/$\cdots$]at (-1,-3)  {};
		\node (n6) [vrtx=left/$x_3$]at (-2,-1.5)  {};
		\node (n7) [vrtx=below/$y_1$]at (3.5,-1.5)  {};
		\node (n8) [vrtx=below/$y_2$]at (5,-1.5)  {};
		\node (n9) [vrtx=below/$y_3$]at (6.5,-1.5)  {};
		\node (n10) [vrtx=below/$\cdots$]at (8,-1.5)  {};
		\node (n11) [vrtx=below/$y_{n-1}$]at (9.5,-1.5)  {};
		\node (n12) [vrtx=below/$y_n$]at (11,-1.5)  {};
		\foreach \from/\to in {n1/n2,n2/n3,n3/n4,n4/n5,n5/n6,n6/n1, n3/n7, n8/n7, n8/n9, n9/n10, n10/n11, n11/n12}		
		\draw (\from) -- (\to);	
	\end{tikzpicture}
	\caption{Tadpole $T_{m,n}$}
	\label{fig1}
\end{figure}

Our main objective is to give a complete classification of the tadpole graphs
\(T_{m,n}\) for which the associated Artinian algebra \(A(T_{m,n})\) has, or fails
to have, the weak Lefschetz property. The classification is as follows.

\begin{theorem} \label{thm:mainTheorem_WLP}
Assume that $\operatorname{char}(\Bbbk)=0$.
The algebra \(A(T_{m,n})\) has the weak Lefschetz property if and only if one of the following conditions holds:
	\begin{enumerate}[\rm (i)]
		\item \(m\in\{3,6,10\}\) and \(n\in\{1,3,4,7\}\);
		\item \(m=4\) and \(n\in\{1,2,\ldots,7,9,10,13\}\);
		\item \(m\in\{5,8\}\) and \(n\in\{1,2,3,5,6,9\}\);
		\item \(m=7\) and \(n\in\{1,2,3,4,7\}\);
		\item $
		(m,n)\in
		\{(9,1),(9,4),(11,2),(11,3),(11,6),(12,1),
		(13,1),(13,4),(14,3),(16,1)\}.$
	\end{enumerate}
\end{theorem}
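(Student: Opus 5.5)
The plan follows the approach used for paths, cycles and lollipop graphs in \cite{HPS26,NT24b,T21}. The Hilbert function of $A(T_{m,n})$ is the sequence of coefficients of the independence polynomial $I(T_{m,n};t)=\sum_i s_i t^i$, where $s_i$ is the number of independent sets of size $i$. In characteristic zero we may take $\ell=\sum x_i+\sum y_j$, since for a monomial algebra the sum of the variables is a Lefschetz element whenever one exists.

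We first use the unimodality result and mode bounds from the abstract. Let $p$ be a mode of the independence polynomial, and write $\alpha=\alpha(T_{m,n})=\lfloor m/2\rfloor+\lceil n/2\rceil$ for the independence number. The WLP then amounts to two conditions on the maps $\times\ell:[A]_{i}\to[A]_{i+1}$: they must be injective for $i<p$ and surjective for $i\ge p$ (with the appropriate convention if the mode is not unique). These maps are the \emph{up operators} on the poset of independent sets, with $\sigma\mapsto\sum\sigma\cup\{v\}$. In characteristic zero they have maximal rank whenever the poset admits suitable matchings, so the algebraic problem becomes a combinatorial one.

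For the \textbf{failure direction} we argue in two steps.
\begin{enumerate}[\rm (1)]
\item \emph{Surjectivity near the top.} In degrees near $\alpha$ we exhibit explicit nonzero elements of the kernel of $\times\ell$, or we show that $[A]_{i+1}\not\subseteq \ell[A]_i$. The obstruction comes from the top degree. Each maximal independent set $\sigma$ of size $\alpha$ has a socle-type partner $\sigma'$ of size $\alpha-1$ such that $\ell\cdot\sigma'=\sigma$. The relevant count compares $s_\alpha$ with $s_{\alpha-1}$. Surjectivity of $[A]_{\alpha-1}\to[A]_\alpha$ needs $s_{\alpha-1}\ge s_\alpha$, which holds automatically, but it also needs a Hall-type condition. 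That condition fails when the cycle part contributes many components; for example, when $m$ is even, $C_m$ has two perfect ``alternating'' sets.
\item \emph{Injectivity near the mode.} The main obstruction appears when the mode $p$ lies just below the point where $s_{i}\le s_{i+1}$ is needed for injectivity. We use the standard lemma that if $G$ contains an induced subgraph whose algebra fails injectivity in degree $i$, in a way compatible with a tensor (disjoint union) decomposition, then the failure persists. Concretely, we take $T_{m,n}=C_m\cup P_n$ together with the bridge. Deleting the bridge relates $A(T_{m,n})$ to $A(C_m)\otimes A(P_n)$, and a known non-WLP for long paths and cycles (for instance $P_n$ fails the WLP for large $n$, as in \cite{NT24b}) propagates via a short exact sequence
\[
0\to A(T_{m,n-2})(-1)\xrightarrow{\,y_n\,} A(T_{m,n})\to A(T_{m,n-1})\to 0,
\]
or its analogue obtained by removing $y_n$ and $y_{n-1}$.
\end{enumerate}
These steps show that for $m\ge 17$, or $n$ large relative to $m$, some map fails to have maximal rank.

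For the \textbf{positive direction} we use the same exact sequences (deleting the leaf $y_n$, its neighbor, or a cycle vertex) together with the snake lemma to induct. The base cases are the finitely many listed pairs. For these we verify maximal rank directly: the exact sequences reduce the claim to known WLP results for paths and cycles in the degrees near the mode, and the remaining degrees are handled by explicit matrix rank computations, possibly computer-assisted.

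The main obstacle is sharpness. We must prove that every pair outside the list fails, including the sporadic exclusions such as $(4,8)$, $(4,11)$ and $(5,4)$. These require a precise location of the mode from the mode bounds, followed by a witness in exactly the critical degree. We would first try to show that failure for $(m,n)$ implies failure for $(m,n+3)$ and $(m+3,n)$ through the exact sequences, which reduces the argument to a finite check.
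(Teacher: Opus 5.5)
\textbf{The failure direction, which is where the classification has content, is not established.} You never give a valid argument ruling out the infinitely many pairs off the list. Each mechanism you propose has a problem:
\begin{itemize}
\item The ``top degree'' obstruction is misplaced and partly incorrect. $\ell\cdot x_{\sigma'}$ is the sum of $x_{\sigma'\cup\{v\}}$ over all admissible $v$, not a single $x_\sigma$. A Hall-type matching does not give maximal rank of the specific $0/1$ matrix of $\times\ell$. You never show that Hall's condition fails in any specific degree. And an obstruction tied to even $m$ would contradict the WLP of, e.g., $A(T_{4,2})$, $A(T_{6,3})$, $A(T_{8,2})$.
\item Removing the bridge goes the wrong way. $A(T_{m,n})=\bigl(A(C_m)\otimes A(P_n)\bigr)/(x_my_1)$ is a quotient of the tensor product, so failures of the tensor product do not descend to it.
\item The leaf sequence $0\to A(T_{m,n-2})(-1)\to A(T_{m,n})\to A(T_{m,n-1})\to 0$ is correct, but it transfers only two things. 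It transfers a surjectivity failure of the quotient in a degree $\ge\mu_{m,n}$. It transfers an injectivity failure of the submodule, shifted by one, landing in a degree $<\mu_{m,n}$. Since $\mu_{m,n}$ can exceed $\mu_{m,n-1}$ by one, neither transfer is automatic, and you do none of this degree bookkeeping.
\end{itemize}
Consequently, your conclusion ``fails for $m\ge17$ or $n$ large relative to $m$'' is unsupported, and it does not even cover a pair like $(9,9)$. The proposed implications ``failure at $(m,n)$ implies failure at $(m,n+3)$ and $(m+3,n)$'' come with no mechanism at all. Yet your reduction to a finite check, and every sharpness claim such as $(4,8)$, $(4,11)$, $(5,4)$, rests on them. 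Your remark that maximal rank follows from suitable matchings is also false in general: the support of a matrix can have a perfect matching while the matrix has deficient rank.

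\textbf{The missing idea is a uniform obstruction placed in a degree matched to the mode.} Put $N=m+n$ and let $\mu$ be the mode of $I(T_{m,n};t)$, which is unimodal by the tail theorem. The paper splits on the position of $\mu$:
\begin{itemize}
\item Deleting the cycle vertex $x_{m-1}$ gives a graded surjection $A(T_{m,n})\twoheadrightarrow A(P_{N-1})$. For $N-1\ge17$, $\times\ell$ on $A(P_{N-1})$ is not surjective from degree $\lambda_{N-1}$. If $\mu\le\lambda_{N-1}$, this failure lifts to $A(T_{m,n})$, because surjectivity is required there.
\item If $\mu>\lambda_{N-1}$, the paper uses an explicit element along the Hamiltonian path $z_1,\dots,z_N$ (namely $x_1,\dots,x_m,y_1,\dots,y_n$). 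This element is $F=\prod_j\bigl(z_{4j+1}+z_{4j+2}-z_{4j+3}-z_{4j+4}\bigr)$, times a small correction factor $G$ when $4\nmid N$. It is nonzero, has degree $\lceil N/4\rceil$, and satisfies $\ell F=0$. Since $\lceil N/4\rceil\le\lambda_{N-1}<\mu$ for $N\ge18$, injectivity is required in that degree and fails.
\item For $N=16$, the same $F$ works after checking $h_4<h_5$ directly.
\end{itemize}
The families $m=3$ and $n=1$ are quoted from earlier work. What remains is finitely many pairs with $m+n\le15$ or $m+n=17$, settled by exact rank computations. Your positive direction amounts to such a computation on the listed pairs. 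The snake-lemma ``induction'' there is not needed, and it would not suffice on its own: maximal rank on the sub and the quotient does not give maximal rank on the middle term unless both maps are injective, or both surjective, in that degree.
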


In addition to this classification, we study the independence polynomials 
of tadpole graphs. We prove that $I(T_{m,n};t)$ is unimodal for all 
$m\geq 3$ and $n\geq 1$, and we derive useful bounds for its mode. 
Recall that the independence polynomial of the path $P_n$ has mode
$\lambda_n=\left\lceil
\frac{5n+2-\sqrt{5n^2+20n+24}}{10}
\right\rceil$.
\begin{theorem}
The independence polynomial $I(T_{m,n};t)$ is unimodal for all $m\geq 3$ 
and $n\geq 1$. Moreover, for $m\geq 4$ and $n\geq 1$, its mode 
$\mu_{m,n}$ satisfies
\begin{equation*}
\lambda_{m-3}+\lambda_n-1
\leq \mu_{m,n}
\leq \lambda_{m-3}+\lambda_n+2.
\end{equation*}
\end{theorem}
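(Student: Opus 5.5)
Our plan is to decompose the independence polynomial of $T_{m,n}$ by conditioning on the cycle vertex $x_m$ that carries the bridge. Deleting $x_m$ leaves the disjoint union of the path $x_1,\dots,x_{m-1}$ (a copy of $P_{m-1}$) and $P_n$. Deleting the closed neighbourhood $N[x_m]=\{x_1,x_{m-1},x_m,y_1\}$ leaves $P_{m-3}$ on $x_2,\dots,x_{m-2}$ together with $P_{n-1}$. Writing $p_k(t)=I(P_k;t)$ with $p_0=1$ and $p_{-1}=1$, this gives
\begin{equation*}
I(T_{m,n};t)=p_{m-1}(t)\,p_n(t)+t\,p_{m-3}(t)\,p_{n-1}(t).
\end{equation*}
Using $p_{m-1}=p_{m-2}+t\,p_{m-3}$ and $p_n=p_{n-1}+t\,p_{n-2}$, we would rewrite this to isolate the product $p_{m-3}p_n$, for instance
\begin{equation*}
I(T_{m,n};t)=p_{m-3}(t)\,p_n(t)\,(1+t)+\bigl(\text{correction terms involving } p_{m-4}\bigr).
\end{equation*}
The argument then reduces to properties of path polynomials. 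Each $p_k$ is real-rooted, so it is log-concave with no internal zeros, and its mode is $\lambda_k$.

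\textbf{Unimodality.} We would invoke the paper's general criterion for graphs obtained by attaching a path to a fixed vertex $v$ of a graph $H$. If the pendant path has $n$ vertices, then $I(G)=I(H)p_n+t\,I(H-N[v])p_{n-1}$. The criterion should say that unimodality follows once $I(H)$ and $I(H-N[v])$ are log-concave with no internal zeros and their modes are close. Here $H=C_m$ and $H-N[x_m]\cong P_{m-3}$, and both have real-rooted independence polynomials. Expanding in the other direction gives $I(T_{m,n})=I(C_m)p_n+t\,p_{m-3}p_{n-1}$. In this form both summands are products of real-rooted polynomials, hence log-concave. It remains to check that their modes differ by at most one, or that the sum still interlaces. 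I would try to show that $I(C_m)p_n$ and $p_{m-3}p_{n-1}$ have a common interlacer, which would make the sum real-rooted, and otherwise fall back on the mode-gap criterion. For small $m=3$ this can be checked directly.

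\textbf{Mode bounds.} We would use the standard fact that the mode of a product of two real-rooted polynomials with modes $a$ and $b$ lies in $\{a+b-1,a+b,a+b+1\}$; this follows from Darroch's theorem, since the mode is within one of $f'(1)/f(1)$ and means add. For $m\ge4$, both summands $p_{m-1}p_n$ and $t\,p_{m-3}p_{n-1}$ are related to $p_{m-3}p_n$. Indeed, $\lambda_{m-1}\in\{\lambda_{m-3},\lambda_{m-3}+1\}$ and $\lambda_{n-1}\in\{\lambda_n-1,\lambda_n\}$, because $\lambda_k$ grows roughly like $k(5-\sqrt5)/10$, less than one per two steps. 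The factor $t$ shifts the mode by one. So each summand has mode in the window $[\lambda_{m-3}+\lambda_n-1,\ \lambda_{m-3}+\lambda_n+2]$. A sum of two unimodal polynomials whose modes both lie in an interval has its mode in that interval, provided the sum is unimodal, which was established above. This yields the stated bounds.

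The main obstacle is the unimodality step, specifically controlling the mode gap between the two summands uniformly in $m$ and $n$. I would first attempt real-rootedness via interlacing. Since $p_k$ and $p_{k-1}$ interlace, it is plausible that $I(T_{m,n})$ is itself real-rooted, being the independence polynomial of a claw-free graph. In fact $T_{m,n}$ is claw-free whenever no vertex has three pairwise nonadjacent neighbours. However, $x_m$ has neighbours $x_1,x_{m-1},y_1$, which are independent for $m\ge4$, so this route fails for $m\ge4$. For that reason the general path-attachment criterion is the route to rely on.
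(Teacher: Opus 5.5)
\textbf{Unimodality is never established.} Your first identity $I(T_{m,n};t)=p_{m-1}p_n+t\,p_{m-3}p_{n-1}$ is correct. The ``general criterion'' formula $I(G)=I(H)p_n+tI(H-N[v])p_{n-1}$ is not: conditioning on $v$ gives $I(H-v)p_n+tI(H-N[v])p_{n-1}$. Your alternative expansion $I(C_m)p_n+tp_{m-3}p_{n-1}$ is therefore false; for $T_{3,1}$ it gives $1+5t+3t^2$ instead of $1+4t+2t^2$. More seriously, you never show that the modes of $p_{m-1}p_n$ and $tp_{m-3}p_{n-1}$ differ by at most one, nor that the two summands share a common interlacer; both are left as things to try. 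Darroch's theorem cannot close this on its own. It only places each mode at the floor or ceiling of the corresponding mean, which allows a mode gap of $2$ even when the means are close.

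The missing idea is to expand at the free end $y_n$ of the tail rather than at the attachment vertex. This gives $I(T_{m,n};t)=I(T_{m,n-1};t)+tI(T_{m,n-2};t)$ with $T_{m,0}:=C_m$, and you carry the invariant $0\le\nu_n-\nu_{n-1}\le1$ by induction. The two summands then have modes $\nu_{n-1}$ and $\nu_{n-2}+1\in\{\nu_{n-1},\nu_{n-1}+1\}$. Lemma~\ref{unimodal_mode_unit} then gives unimodality and propagates the invariant (Theorem~\ref{unimodal_tail}). The only input is the base comparison of the modes of $C_m$ and the pan $T_{m,1}$; no real-rootedness of $I(T_{m,n};t)$ is needed.

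\textbf{The mode window is not justified either.} You claim that a product of real-rooted polynomials with modes $a,b$ has mode in $\{a+b-1,a+b,a+b+1\}$, but your argument does not give this. Darroch only yields $a-1<f'(1)/f(1)<a+1$ and $b-1<g'(1)/g(1)<b+1$, so the mode of $fg$ is only known to lie in $[a+b-2,a+b+2]$. Both summands in your decomposition are such products, so your route gives only $s-2\le\mu_{m,n}\le s+3$, where $s=\lambda_{m-3}+\lambda_n$.

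The paper avoids this loss in two ways. First, it expands at $x_{m-1}$, giving $I(T_{m,n};t)=I(P_{m+n-1};t)+tI(P_{m-3};t)I(P_n;t)$. One summand then has the exactly known mode $\lambda_{m+n-1}$, which lies in $[s-1,s+1]$ by Lemma~\ref{inequality_lambda}. Second, it uses the path-specific estimate $I'(P_r;1)/I(P_r;1)\le\lambda_r+\tfrac12$ (Lemma~\ref{lem:path_mean_mode}), which caps the mode of $tI(P_{m-3};t)I(P_n;t)$ at $s+2$. The bound $\lceil I'(P_r;1)/I(P_r;1)\rceil\ge\lambda_r$ gives the matching lower bound $s-1$. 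Lemma~\ref{lem_compare_mode} then yields the stated sharp window.
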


The sharp mode bounds describe the location of the largest
coefficient of $I(T_{m,n};t)$ in terms of the modes of two
path independence polynomials. Both endpoints are attained,
as shown in Remark~\ref{rem:sharp-mode-bounds}.

Our approach to unimodality uses a general criterion for
graphs obtained by attaching a path to a fixed vertex.
The WLP classification uses the resulting unimodality
together with two algebraic obstructions: a failure of
surjectivity inherited from a path quotient and an explicit
nonzero element annihilated by the sum of the variables.
Together, these obstructions show that $A(T_{m,n})$ fails
the WLP whenever $m+n\geq18$. A coefficient comparison
handles $m+n=16$, and the remaining cases are settled by
exact computations in \texttt{Macaulay2}~\cite{HPS26M2}.

The paper is organized as follows. Section~2 recalls the necessary 
background on unimodal polynomials, independence polynomials, and the weak 
Lefschetz property. Section~3 establishes the unimodality of the independence 
polynomials of tadpole graphs and derives bounds for their modes. Finally, 
Section~4 proves the classification stated in Theorem~\ref{thm:mainTheorem_WLP}.

\section{Preliminaries}

This section reviews standard terminology and notation from commutative algebra and combinatorial commutative algebra, together with several auxiliary results that will be used in the subsequent sections.

\subsection{Unimodal polynomials and their modes}

\begin{definition}\label{mode}
Let $P(x)=\sum_{k=0}^{n}a_kx^k$ be a nonzero polynomial with nonnegative real coefficients.
	\begin{enumerate}[\quad \rm (i)]
		\item The polynomial $P(x)$ is said to be \emph{unimodal} if there exists an integer $m$ such that
		\[
		a_0 \leq a_1 \leq \cdots \leq a_{m-1} \leq a_m \geq a_{m+1} \geq \cdots \geq a_n.
		\]
		Set $a_{-1} = 0$. The \emph{mode} of the unimodal polynomial $\sum_{k=0}^{n} a_k x^k$ is defined to be the unique integer $i \in \{0, 1, \ldots, n\}$ satisfying
		\[
		a_{i-1} < a_i \geq a_{i+1} \geq \cdots \geq a_n.
		\]
		
		\item The polynomial $P(x)$ is said to be \emph{log-concave} if $a_k^2 \geq a_{k-1} a_{k+1}$ for all $1 \leq k \leq n-1$.
	\end{enumerate}
\end{definition}
For a nonnegative sequence with no internal zeros, log-concavity
implies unimodality. The following classical consequence of Newton's
inequalities gives a useful sufficient condition.
\begin{theorem}[{\rm \cite{PB15}}]
	If the polynomial $P(x) = a_0 + a_1x + \cdots + a_nx^n \in \mathbb{R}[x]$ has positive coefficients and all its roots are real, then it is log-concave and hence unimodal.
\end{theorem}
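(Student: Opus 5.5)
This is the classical statement: a real-rooted polynomial with positive coefficients has log-concave, and hence unimodal, coefficients. I will deduce it from Newton's inequalities.

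\emph{Step 1: factor over the reals.} Let $P(x)=\sum_{k=0}^n a_kx^k$ with all $a_k>0$ and all roots real. Since the coefficients are positive, $P(x)>0$ for every $x\ge 0$, so every root is negative. Hence
\[
P(x)=a_n\prod_{j=1}^n (x+r_j), \qquad r_j>0,
\]
and therefore $a_k=a_n\,e_{n-k}(r_1,\dots,r_n)$, where $e_i$ is the $i$-th elementary symmetric polynomial.

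\emph{Step 2: Newton's inequalities.} Set $E_i=e_i/\binom{n}{i}$. For real $r_j$, Newton's inequalities state $E_i^2\ge E_{i-1}E_{i+1}$ for $1\le i\le n-1$. The standard proof goes as follows:
\begin{enumerate}
\item Differentiating a real-rooted polynomial preserves real-rootedness (Rolle's theorem).
\item Replacing $x$ by $1/x$ and multiplying through by $x^{n}$ also preserves real-rootedness.
\item Starting from $\sum_{k}\binom{n}{k}E_k x^{k}$, apply $i-1$ derivatives, reverse, and apply $n-i-1$ further derivatives. This reduces the polynomial to the real-rooted quadratic $c\,(E_{i-1}+2E_ix+E_{i+1}x^2)$ with $c>0$.
\item A real-rooted quadratic has nonnegative discriminant, which gives $E_i^2\ge E_{i-1}E_{i+1}$.
\end{enumerate}
Step 3 is the only delicate part. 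It requires tracking the binomial normalizations through the derivatives so that the quadratic comes out with exactly these coefficients.

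\emph{Step 3: conclude.} Translating back to the coefficients gives
\[
a_k^2\ \ge\ a_{k-1}a_{k+1}\,\frac{\binom{n}{k}^2}{\binom{n}{k-1}\binom{n}{k+1}}\ \ge\ a_{k-1}a_{k+1}.
\]
The last inequality holds because the binomial coefficients are themselves log-concave, so the ratio is at least $1$. Thus $P$ is log-concave.

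Since all $a_k>0$, the sequence has no internal zeros. The ratios $a_{k+1}/a_k$ are then nonincreasing in $k$, so the coefficients increase while this ratio is at least $1$ and decrease afterwards. This gives unimodality.
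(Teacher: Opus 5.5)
Your proof is correct and follows the classical route the paper has in mind. The paper gives no proof of its own and simply cites this as a consequence of Newton's inequalities; you fill that in by factoring with negative roots, deriving Newton's inequalities through Rolle's theorem and reversal, and using the log-concavity of the binomial coefficients together with positivity to get unimodality. One small slip: with your order of operations the final quadratic is actually $c\,(E_{i+1}+2E_i x+E_{i-1}x^2)$ rather than the reverse, but it has the same discriminant, so the argument is unaffected.
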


The determination of the mode relies heavily on the following result.

\begin{theorem}
[{\rm \cite[Théorème~B]{Be96}}]
\label[theorem]{thm_limit_mode}
	If the polynomial $P(x) = a_0 + a_1x + \cdots + a_nx^n \in \mathbb{R}[x]$ has positive coefficients and all its roots are real, then the mode $k$ of $P(x)$ satisfies
	\[
	\left\lfloor \frac{P'(1)}{P(1)} \right\rfloor \leq k \leq \left\lceil \frac{P'(1)}{P(1)} \right\rceil.
	\]
\end{theorem}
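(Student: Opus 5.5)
The plan is to pass to the probabilistic model behind real-rooted polynomials with positive coefficients, and then compare the mode with the mean using log-concavity. Since all roots of $P$ are real and all coefficients are positive, no root is nonnegative, so we can write
\[
P(x)=a_n\prod_{i=1}^{n}(x+r_i),\qquad r_i>0 .
\]
Put $p_i=1/(1+r_i)\in(0,1)$. Then $a_k/P(1)$ is exactly the probability that a sum $S=\sum_i B_i$ of independent Bernoulli variables with $\Pr(B_i=1)=p_i$ equals $k$. Logarithmic differentiation gives
\[
\frac{P'(1)}{P(1)}=\sum_i\frac{1}{1+r_i}=\sum_i p_i=\mathbb{E}[S]=:\mu .
\]
So the statement is Darroch's theorem: the mode of a Poisson--binomial distribution lies in $\{\lfloor\mu\rfloor,\lceil\mu\rceil\}$.

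By the preceding theorem (Newton's inequalities), the sequence $(a_k)$ is log-concave, and in fact strictly so in the real-rooted case. Hence the ratios $q_k=a_{k+1}/a_k$ are strictly decreasing. It therefore suffices to prove two inequalities:
\[
a_{\lfloor\mu\rfloor-1}\le a_{\lfloor\mu\rfloor}
\qquad\text{and}\qquad
a_{\lceil\mu\rceil}\ge a_{\lceil\mu\rceil+1}.
\]
Equivalently, $q_k\ge1$ whenever $k+1\le\mu$, and $q_k\le1$ whenever $k\ge\mu$. By the symmetry $x\mapsto x^nP(1/x)$, which replaces $r_i$ by $1/r_i$ and $\mu$ by $n-\mu$, the second inequality follows from the first applied to the reversed polynomial. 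So only one direction needs proof.

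For that direction I would use the polynomials $P_i=P/(x+r_i)$, with coefficients $c^{(i)}_k$, so that $a_k=c^{(i)}_{k-1}+r_ic^{(i)}_k$. Comparing coefficients in $P'=\sum_iP_i$ gives
\[
(k+1)a_{k+1}=\sum_i c^{(i)}_k,
\]
and summing the recursion gives
\[
(n-k)a_k=\sum_i r_ic^{(i)}_k .
\]
Hence $a_{k+1}\gtrless a_k$ is governed by the sign of $\sum_i c^{(i)}_k\bigl((n-k)-(k+1)r_i\bigr)$. In probabilistic terms, this is the identity
\[
(k+1)\Pr(S=k+1)=\sum_i p_i\Pr(S-B_i=k).
\]
Each $P_i$ is again real-rooted, so each $c^{(i)}$ is log-concave. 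The idea is then to compare $\Pr(S-B_i=k)$ with $\Pr(S=k)$ across $i$, in a Chebyshev/rearrangement fashion. Indices with large $p_i$ make $S-B_i$ stochastically smaller, and this correlation pushes the sum in the right direction. The target is
\[
\frac{a_{k+1}}{a_k}\ge\frac{\mu-k}{k+1-\mu+\text{(nonneg)}}\ge1\qquad\text{when } k+1\le\mu .
\]

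The main obstacle is this last comparison, namely turning the identity into the clean threshold $k+1\le\mu$ rather than a weaker one. If the direct rearrangement argument does not close, I would fall back on Darroch's original strategy. That strategy fixes $\mu$ and the target index $k$, and observes that $a_{k+1}-a_k$ is multi-affine in the $p_i$. One then shows that its minimum over $\{\sum p_i=\mu\}$ is attained when all $p_i$ take at most two distinct values, one of them being $0$ or $1$. This reduces the problem to a binomial distribution, possibly shifted, where the statement is checked directly from $\binom{n}{k+1}p^{k+1}(1-p)^{n-k-1}$ versus $\binom{n}{k}p^k(1-p)^{n-k}$.
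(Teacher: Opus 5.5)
The paper gives no proof of this statement. It imports it from \cite{Be96} (it is Darroch's theorem on Poisson--binomial laws), so your argument has to stand on its own. Your reduction is sound: $p_i=1/(1+r_i)$, $P'(1)/P(1)=\sum_i p_i$, the identities $(k+1)a_{k+1}=\sum_i c^{(i)}_k$ and $(n-k)a_k=\sum_i r_ic^{(i)}_k$, strict log-concavity, and the reversal $x^nP(1/x)$ are all correct. Two things keep the proposal from being a proof. First, the paper's mode is the leftmost maximizer ($a_{i-1}<a_i\geq a_{i+1}\geq\cdots$). The lower bound therefore needs the \emph{strict} inequality $a_{\lfloor\mu\rfloor-1}<a_{\lfloor\mu\rfloor}$; with equality the mode would be $\lfloor\mu\rfloor-1$. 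So what you must show (when $\mu\geq1$) is $a_{k+1}>a_k$ for $k=\lfloor\mu\rfloor-1$, not merely $q_k\geq1$.

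Second, and decisively, that inequality is never proved. Your main route rests on ``large $p_i$ make $S-B_i$ stochastically smaller.'' Stochastic order controls distribution functions, not the point masses $\Pr(S-B_i=k)$ that appear as weights in your identity. With $T=S-B_i-B_j$ one has
\[
\Pr(S-B_i=k)-\Pr(S-B_j=k)=(p_j-p_i)\bigl(\Pr(T=k-1)-\Pr(T=k)\bigr),
\]
whose sign depends on the shape of the law of $T$ near $k$ and is not fixed by $k+1\leq\mu$. For instance, $p=(0.95,0.7,0.4)$ and $k=1$ give $\mu=2.05$, but the weights are $0.54,\,0.59,\,0.32$, which are not similarly ordered with the $p_i$. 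So Chebyshev's sum inequality cannot be invoked. Your displayed ``target'' does not help either, since its denominator is the nonpositive number $k+1-\mu$ plus an unspecified term. The fallback is the route that can actually be completed, but you only name it, and you misstate it: the extremal configurations of this Hoeffding-type argument take values in $\{0,a,1\}$, and all three may occur.

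To close the gap, note that for fixed $p_i+p_j$ the quantity $\Pr(S=k+1)-\Pr(S=k)$ is affine in $p_ip_j$. Hence a minimizer on $[0,1]^n\cap\{\sum p_i=\mu\}$ can be chosen with all coordinates outside $\{0,1\}$ equal, so that $S=s+\mathrm{Bin}(n',a)$ with $s+n'a=\mu$. For $k=\lfloor\mu\rfloor-1$ one has $s\leq k+1$. Set $j=k-s$; the case $j=-1$ is trivial. For $j\geq0$, the condition $n'a\geq j+1$ gives $(n'-j)a>(j+1)(1-a)$, so the minimum is positive, which is the needed strict inequality. Your reversal argument then yields the upper bound.
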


 We now study the modes of certain unimodal polynomials. Specifically, we provide some results that allow us to bound the modes and to show that the sum of two unimodal polynomials is also unimodal when their modes differ by at most one. Given a polynomial $f(x) = \sum_{i=0}^{n} a_i x^i$, we set $a_k = 0$ for all $k > n$ or $k < 0$.
 
 \begin{lemma}\cite[Lemma~4.1]{HPS26}\label[lemma]{unimodal_mode_unit}
 Let $f$ and $g$ be unimodal polynomials with nonnegative real coefficients, whose modes are $\mu$ and $\nu$, respectively. 
If $\mid\mu - \nu\mid \le 1$, then $f + g$ is also unimodal, and its mode belongs to $\{\min(\mu, \nu),\, \min(\mu, \nu) + 1\}$.
 \end{lemma}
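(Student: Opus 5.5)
Let $f=\sum a_k x^k$ and $g=\sum b_k x^k$. By symmetry assume $\mu\le\nu\le\mu+1$. The plan is to show that the coefficients $c_k=a_k+b_k$ of $f+g$ are nondecreasing up to index $\mu$, nonincreasing from index $\mu+1$ onward, and then to locate the mode from these two facts.

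First we handle the indices before $\mu$. For $k\le \mu$ we have $k\le\nu$ as well. Both $f$ and $g$ are nondecreasing up to their modes, so $a_{k-1}\le a_k$ and $b_{k-1}\le b_k$. Hence $c_{k-1}\le c_k$ for all $k\le\mu$.

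Next we handle the indices after $\mu$. For $k\ge \mu+1$ we have $k\ge\nu$ when $\nu\le\mu+1$. Both sequences are nonincreasing from their modes on, using $a_{\mu}\ge a_{\mu+1}\ge\cdots$ and $b_\nu\ge b_{\nu+1}\ge\cdots$. Hence $c_k\ge c_{k+1}$ for all $k\ge\mu+1$. In the case $\nu=\mu$ this already holds from $k\ge\mu$ on.

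Combining the two steps, the sequence $(c_k)$ rises weakly up to $\mu$ and falls weakly from $\mu+1$. Only the single comparison between $c_\mu$ and $c_{\mu+1}$ is undetermined, and either outcome leaves the sequence unimodal. So $f+g$ is unimodal.

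For the mode we use the definition of mode as the first index attaining the maximum, with the condition $a_{i-1}<a_i$. Two cases arise.

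If $c_{\mu+1}\le c_\mu$, the tail is nonincreasing from $\mu$. The mode is then the smallest $i\le\mu$ with $c_{i-1}<c_i=\cdots=c_\mu$. We need to rule out $i<\mu$. Here $a_{\mu-1}<a_\mu$ by the definition of the mode $\mu$ of $f$, and $b_{\mu-1}\le b_\mu$ since $\mu\le\nu$. Therefore $c_{\mu-1}<c_\mu$ strictly, and the mode equals $\mu$. When $\mu=0$ this holds trivially.

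If $c_{\mu+1}>c_\mu$, the maximum is first reached at $\mu+1$. The strict inequality $c_\mu<c_{\mu+1}$ shows the mode is exactly $\mu+1$.

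The only delicate step is this strictness argument. The strict rise $a_{\mu-1}<a_\mu$ supplied by the mode of the smaller-mode polynomial is what prevents the mode of $f+g$ from drifting below $\min(\mu,\nu)$. Note also that nonnegativity of the coefficients is not actually needed beyond the standing hypotheses.
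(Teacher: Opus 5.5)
The paper gives no proof of this lemma (it is quoted from \cite[Lemma~4.1]{HPS26}), so there is no in-paper argument to compare against. Your direct coefficient argument is correct and complete. With $\mu\le\nu\le\mu+1$, the sums $c_k=a_k+b_k$ weakly increase through index $\mu$ and weakly decrease from index $\mu+1$ on. The strict rise $a_{\mu-1}<a_\mu$ then places the first maximum at $\mu$ or $\mu+1$.

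Two minor points. First, you use without comment that a unimodal sequence is nondecreasing up to its mode. Under the paper's definition this needs one line: any peak index $m$ witnessing unimodality satisfies $m\ge\mu$, since otherwise $a_{\mu-1}\ge a_\mu$, contradicting $a_{\mu-1}<a_\mu$. Second, your closing remark about nonnegativity is inaccurate. The case $\mu=0$ is ``trivial'' only because $c_0\ge a_0>0$, and that step uses $b_0\ge 0$.

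As for how your route relates to the paper: once $f+g$ is known to be unimodal, the mode localization is exactly Lemma~\ref{lem_compare_mode}, which gives $\min(\mu,\nu)\le r\le\max(\mu,\nu)$. Your strictness step is essentially the special case of its Claim~1. Your argument also shows directly that for $\mu=\nu$ the mode of $f+g$ is exactly $\mu$, because then $c_\mu\ge c_{\mu+1}$. This is the sharper form used in Case~2 (and the base case) of the proof of Theorem~\ref{unimodal_tail}.
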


\begin{lemma}\label{lem_compare_mode}
	Let $f$ and $g$ be two unimodal polynomials with nonnegative real coefficients such that $f + g$ is unimodal. 
	Denote by $p$, $q$, and $r$ the modes of $f$, $g$, and $f + g$, respectively. 
	Then 
	\[
	\min(p, q) \leq r \leq \max(p, q).
	\]
\end{lemma}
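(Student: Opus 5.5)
Without loss of generality assume $p\le q$, and write $f=\sum a_ix^i$, $g=\sum b_ix^i$, $h=f+g=\sum c_ix^i$ with $c_i=a_i+b_i$. We must show $p\le r\le q$. Recall that the mode of a unimodal polynomial is the \emph{first} index at which the maximum coefficient is attained: by the definition, $a_{p-1}<a_p\ge a_{p+1}\ge\cdots$, and unimodality gives $a_0\le\cdots\le a_p$. Combining these, every $i<p$ satisfies $a_i<a_p$, and every $i>p$ satisfies $a_i\le a_p$.

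For the lower bound $r\ge p$, we argue by contradiction and suppose $r<p\le q$. On the range $i\le p$ the sequence $(a_i)$ is nondecreasing, with the strict step $a_{p-1}<a_p$. On the range $i\le q$ the sequence $(b_i)$ is nondecreasing. Since $r<p$, the interval $[r,p]$ lies in both ranges, so $c_r\le c_{r+1}\le\cdots\le c_p$. At least one of these steps is strict, namely the step from $p-1$ to $p$, because $a_{p-1}<a_p$ and $b_{p-1}\le b_p$. Hence $c_r<c_p$. This contradicts the fact that $c_r$ is the maximum coefficient of $h$. So $r\ge p$.

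For the upper bound $r\le q$, suppose instead $r>q\ge p$. Both $(a_i)$ and $(b_i)$ are nonincreasing for $i\ge q$, since $q\ge p$ and $q$ is the mode of $g$. Therefore $c_q\ge c_{q+1}\ge\cdots\ge c_r$, and in particular $c_q\ge c_r$. Because $c_r$ is the maximum coefficient, we get $c_q=c_r$, so the maximum of $h$ is attained at $q<r$. This contradicts the fact that $r$ is the first index where $h$ attains its maximum. Hence $r\le q$.

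The only delicate point is using the right characterization of the mode, namely as the first maximizing index. This is what gives the strict inequality in the lower bound and the contradiction in the upper bound. I would start by stating this characterization explicitly as a consequence of Definition~\ref{mode} combined with unimodality.
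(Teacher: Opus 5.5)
Your proof is correct and follows essentially the same route as the paper: each bound is proved by contradiction, using that $f$ and $g$ are monotone on either side of their modes and that the step into a mode is strict. The only difference is cosmetic. You phrase the argument through the characterization of the mode as the first index where the maximum is attained, comparing $c_r$ with $c_p$ (respectively $c_q$). The paper instead compares adjacent coefficients: $c_{s-1},c_s$ with $s=\min(p,q)$, and $c_{r-1},c_r$.
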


\begin{proof}
	Assume that
	\begin{align*}
		&f(x) = a_0 + a_1x + a_2x^2 + \cdots + a_{n-1}x^{n-1} + a_nx^n,\\
		&g(x) = b_0 + b_1x + b_2x^2 + \cdots + b_{m-1}x^{m-1} + b_mx^m.
	\end{align*}
	
	Then
	\begin{align*}
		&a_0 \leq a_1 \leq \cdots \leq a_{p-1} < a_p \geq a_{p+1} \geq \cdots \geq a_n,\\
		&b_0 \leq b_1 \leq \cdots \leq b_{q-1} < b_q \geq b_{q+1} \geq \cdots \geq b_m.
	\end{align*}
	
	\noindent\textsc{Claim 1:} $\min(p, q) \leq r$.
	
\noindent\textit{Proof of Claim 1:}	Assume to the contrary that $r < \min(p, q) = s$. 
	Then $r \leq s - 1 < s$. 
	Hence $a_{s-1} \leq a_s$ and $b_{s-1} \leq b_s$, which implies 
	$a_{s-1} + b_{s-1} \leq a_s + b_s$. 
	On the other hand, since $s - 1 \geq r$ and $r$ is the mode of $f + g$, we have 
	$a_{s-1} + b_{s-1} \geq a_s + b_s$. 
	Thus $a_{s-1} + b_{s-1} = a_s + b_s$, which forces 
	$a_s = a_{s-1}$ and $b_s = b_{s-1}$. 
	Since $s=\min\{p,q\}$, either $s=p$ or $s=q$. In the first case,
$a_{s-1}<a_s$, while in the second case, $b_{s-1}<b_s$.
Either conclusion contradicts the equalities obtained above.
	
	\noindent\textsc{Claim 2:} $r \leq \max(p, q)$.
	
\noindent\textit{Proof of Claim 2:}	Assume the contrary that $r > \max(p, q)$. 
	Then $r - 1 \geq \max(p, q)$, and hence 
	$a_{r-1} \geq a_r$ and $b_{r-1} \geq b_r$. 
	It follows that $a_{r-1} + b_{r-1} \geq a_r + b_r$. 
	However, since $r$ is the mode of $f + g$, we must have 
	$a_{r-1} + b_{r-1} < a_r + b_r$, a contradiction.
	
Claims 1 and 2 imply that $\min(p, q) \leq r \leq \max(p, q)$.
\end{proof}

\subsection{Graph theory}

From now on, by a \emph{graph} we mean a simple graph $G = (V, E)$ with vertex set $V = V(G)$ and edge set $E = E(G)$. 
We begin by recalling some basic definitions.

\begin{definition}
	Let $G = (V, E)$ be a graph.
	\begin{enumerate}[\quad \rm (i)]
		\item A subset $X$ of $V$ is called an \emph{independent set} of $G$ if for any $u, v \in X$, we have $\{u, v\} \notin E$; that is, the vertices in $X$ are pairwise nonadjacent. 
		If an independent set $X$ has $k$ elements, then $X$ is called an \emph{independent set of size $k$}, or a \emph{$k$-independent set} of $G$.

		\item The \emph{independence number} of a graph $G$ is the largest cardinality of an independent set of $G$. 
		We denote this number by $\alpha(G)$.
		
		\item  	The \emph{independence polynomial} of a graph $G$ is the polynomial in one variable $t$ whose coefficient of $t^k$ equals the number of independent sets of size $k$ in $G$. 
		We denote this polynomial by $I(G; t)$; that is,
		\[
		I(G; t) = \sum_{k=0}^{\alpha(G)} s_k(G) t^k,
		\]
		where $s_k(G)$ denotes the number of independent sets of size $k$ in $G$. 
		Note that $s_0(G) = 1$ since the empty set $\emptyset$ is an independent set of any graph $G$.
	\end{enumerate}
\end{definition}

The independence polynomial of a graph was introduced by Gutman and Harary in~\cite{GH83} as a generalization of the matching polynomial of a graph. 
For a vertex $v \in V$, define 
\[
N(v) = \{\, w \in V \mid \{v, w\} \in E \,\}
\quad \text{and} \quad
N[v] = N(v) \cup \{v\}.
\]
Some basic recursive formulas for computing the independence polynomial of a graph are recalled below (see, for instance, \cite{GH83, HL94}).

\begin{proposition}\label{FormulaforIndependence}
	Let $G_1$ and $G_2$ be vertex-disjoint graphs,
	and let $w$ be a vertex of a graph $G$.
	Then the following identities hold:
	\begin{enumerate}[\rm (i)]
		\item
		$I(G;t)=I(G\setminus w;t)
		+tI(G\setminus N[w];t)$;
		\item
		$I(G_1\cup G_2;t)=I(G_1;t)I(G_2;t)$.
	\end{enumerate}
\end{proposition}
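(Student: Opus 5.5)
The statement to be proved is Proposition~\ref{FormulaforIndependence}, the two basic recursions for the independence polynomial. I would prove both by comparing coefficients, since $s_k(G)$ counts $k$-independent sets and each identity says that these counts decompose in a specific way.

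\textbf{Part (i).} Fix the vertex $w$ and split the family of $k$-independent sets $X$ of $G$ according to whether $w\in X$.
\begin{enumerate}[\rm (a)]
\item If $w\notin X$, then $X$ is a subset of $V\setminus\{w\}$ with no edge inside it. So $X$ is exactly a $k$-independent set of the induced subgraph $G\setminus w$. Conversely, every such set is independent in $G$ and avoids $w$.
\item If $w\in X$, independence forces $X\cap N(w)=\emptyset$. Then $X\setminus\{w\}$ is a $(k-1)$-independent set of $G\setminus N[w]$. Conversely, for any independent set $Y$ of $G\setminus N[w]$, the set $Y\cup\{w\}$ is independent in $G$, because $w$ has no neighbours in $Y$.
\end{enumerate}
These two maps are bijections. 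Hence
\[
s_k(G)=s_k(G\setminus w)+s_{k-1}(G\setminus N[w]),
\]
with the convention $s_{-1}=0$. Multiplying by $t^k$ and summing over $k$ gives $I(G;t)=I(G\setminus w;t)+tI(G\setminus N[w];t)$.

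\textbf{Part (ii).} The graphs $G_1$ and $G_2$ are vertex-disjoint, and no edge of $G_1\cup G_2$ joins them. So a set $X\subseteq V(G_1)\cup V(G_2)$ is independent if and only if both $X\cap V(G_1)$ and $X\cap V(G_2)$ are independent in their respective graphs. This gives a bijection between $k$-independent sets of $G_1\cup G_2$ and pairs $(X_1,X_2)$ of independent sets with $|X_1|+|X_2|=k$. Therefore
\[
s_k(G_1\cup G_2)=\sum_{i+j=k}s_i(G_1)s_j(G_2),
\]
which is exactly the coefficient of $t^k$ in $I(G_1;t)I(G_2;t)$.

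There is no real obstacle here. The only points needing care are the conventions: $G\setminus w$ and $G\setminus N[w]$ denote induced subgraphs, the empty graph has independence polynomial $1$, and $s_{-1}=0$. These conventions make the identities hold even in degenerate cases, such as when $N[w]=V$.
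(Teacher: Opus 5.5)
Your proof is correct. The paper gives no proof of its own; it states the proposition as standard and cites Gutman--Harary and Hoede--Li. Your argument (splitting the $k$-independent sets by whether they contain $w$, and decomposing independent sets of a disjoint union into independent sets of the two parts) is the standard coefficient-comparison proof behind those references. Your conventions (induced subgraphs, $I(\emptyset;t)=1$, $s_{-1}=0$) are the right ones and match how the paper applies the identities, e.g.\ when it deletes $x_{m-1}$ from $T_{m,n}$.
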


An important result concerning the independence polynomials of graphs was proved by Chudnovsky and Seymour~\cite{CS07}.

\begin{theorem}
	The independence polynomial of a claw-free graph has only real zeros.
\end{theorem}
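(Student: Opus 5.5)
This result is due to Chudnovsky and Seymour and is only quoted here. If one wanted to reprove it, I would strengthen the statement so that it carries through an induction on $|V(G)|$. The plan is to work with a Godsil-type divisibility argument, in the style of the proof of the Heilmann--Lieb theorem for matching polynomials. The strengthened statement is: for every claw-free graph $G$ and every vertex $v$, the polynomials $I(G;t)$ and $I(G\setminus v;t)$ have only real zeros, and the zeros of $I(G\setminus v;t)$ interlace those of $I(G;t)$. All zeros are negative, since the coefficients are positive. Real-rootedness of $I(G;t)$ is then one half of this statement.

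The base cases, graphs with at most one vertex, are trivial. For the induction step, fix $v$ and use Proposition~\ref{FormulaforIndependence}(i):
\[
I(G;t)=I(G\setminus v;t)+t\,I(G\setminus N[v];t).
\]
The standard interlacing lemma (Hermite--Kakeya--Obreschkoff) says that $f+tg$ interlaces correctly with $f$ provided $g$ interlaces $f$ and both have positive leading coefficients. So it suffices to show that $I(G\setminus N[v];t)$ interlaces $I(G\setminus v;t)$.

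Claw-freeness enters here. The neighbourhood $N(v)$ contains no independent set of size $3$, so the graph $G[N(v)]$ has independence number at most $2$. To pass from $G\setminus v$ down to $G\setminus N[v]$, delete the neighbours $u_1,\dots,u_d$ of $v$ one at a time and track the ratios $I(G_j;t)/I(G_{j+1};t)$, where $G_j=G\setminus\{v,u_1,\dots,u_{j}\}$. Each step is a single-vertex deletion in a claw-free graph, so by induction each consecutive pair interlaces. Interlacing, however, is not transitive. What is needed is that all the $I(G_j)$ share a common interlacer, or equivalently that every nonnegative combination of them stays real-rooted.

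That step is the main obstacle. My first attempt would use the structure of $N(v)$ directly. If $N(v)$ is a clique, then $I(G;t)$ equals $I$ of a graph in which $v$ becomes simplicial, and the sum $\sum_{u\in N(v)} t\,I(G\setminus N[u];t)$ telescopes cleanly. In general, $\alpha(G[N(v)])\le 2$ should let one order the $u_j$ so that each $G_j$ arises from $G_{j-1}$ by deleting a vertex that is simplicial in $N(v)$. This is the Chudnovsky--Seymour idea of proving that the family $\{I(G\setminus u):u\in K\}$ has a common interlacer for every clique $K$.

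If that becomes too delicate, the fallback is Bencs's construction. For each rooted claw-free graph $(G,v)$, build a claw-free "path tree" $H$ whose independence polynomial is a product of line-graph-of-tree type pieces. One then checks two things:
\[
I(G;t)\mid I(H;t)
\quad\text{and}\quad
\frac{I(G\setminus v;t)}{I(G;t)}=\frac{I(H\setminus v;t)}{I(H;t)}.
\]
Real-rootedness for $H$ follows from Heilmann--Lieb, because $H$ is built from line graphs and independence polynomials of line graphs are matching polynomials. Real-rootedness then transfers to $G$ through the divisibility.
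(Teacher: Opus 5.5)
The paper gives no proof of this statement. It is quoted from Chudnovsky--Seymour and applied only to disjoint unions of paths in the proof of Theorem~\ref{thm:mode_tadpole}, so your opening sentence already matches what the paper does. The re-proof you sketch, however, has a genuine gap: the strengthened induction hypothesis is false. Take $G=P_3$ and let $v$ be its middle vertex. Then $I(G;t)=1+3t+t^2$ has zeros $(-3\pm\sqrt5)/2\approx-2.618,\,-0.382$. But $I(G\setminus v;t)=(1+t)^2$ has a double zero at $-1$, strictly between them. Equivalently, $I(G;t)-3I(G\setminus v;t)=-(2+3t+2t^2)$ has non-real zeros, so by Hermite--Kakeya--Obreschkoff the two polynomials do not interlace. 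Your reduction fails at the same point, since $I(G\setminus N[v];t)=1$ cannot interlace $I(G\setminus v;t)=(1+t)^2$. This is structural, not a small-case accident. In a claw-free graph an independent set of $G\setminus v$ can meet $N(v)$ in two vertices, so $\deg I(G\setminus N[v];t)$ can be $\deg I(G\setminus v;t)-2$, and then no interlacing is possible. So the step ``by induction each consecutive pair interlaces'' rests on a false statement. The obstacle you flag is not a transitivity technicality that a clever ordering of $N(v)$ could repair, because the target statement itself fails.

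What does hold, a posteriori, is only the weaker property that $I(G\setminus v;t)$ and $I(G;t)$ are \emph{compatible}: they have a common interlacer (here $1+t$), equivalently every nonnegative combination of them is real-rooted. Chudnovsky and Seymour's argument is organised around this notion and around \emph{simplicial cliques} $C$, meaning cliques such that $N(u)\setminus C$ is a clique for every $u\in C$. They use the expansion $I(G;t)=I(G\setminus C;t)+t\sum_{u\in C}I(G\setminus N[u];t)$. Claw-freeness is exactly what makes $N(u)\setminus C$ a simplicial clique of $G\setminus C$. Combining several polynomials is handled by their lemma that pairwise compatible real-rooted polynomials with positive leading coefficients are jointly compatible, and that lemma is the ingredient your ``common interlacer'' step is missing.

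Your fallback via Bencs's path-tree construction is a legitimate alternative route, but as written it is only a pointer. Three things are asserted rather than checked, and together they are the entire content of that proof: the ratio identity, the divisibility $I(G;t)\mid I(H;t)$, and a non-circular reason why $I(H;t)$ is real-rooted. For the last, you would need, for instance, that $H$ really is a line graph; invoking claw-freeness of $H$ would be circular.
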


Recall that a graph is said to be \emph{claw-free} if it does not contain the complete bipartite graph $K_{1,3}$ as an induced subgraph.

\subsection{The weak Lefschetz property}

Let $R = \Bbbk[x_1, \ldots, x_n]$ be a polynomial ring over a field $\Bbbk$, where each variable $x_i$ has degree~$1$, and let $I \subset R$ be a homogeneous Artinian ideal. 
Then $A = R / I$ is a graded Artinian algebra, which can be written as
\[
A = \bigoplus_{i=0}^{D} [A]_i.
\]

Throughout this paper, we restrict our attention to Artinian algebras defined by monomial ideals. In this setting, it suffices to choose the Lefschetz element as the sum of all variables.
\begin{proposition}
[\cite{MMN11,LN19}]\label[proposition]{Proposition2.5}
Let $I \subset R=\Bbbk[x_1,\ldots,x_n]$ be an Artinian monomial ideal. Then the algebra $A=R/I$ has the WLP if and only if $\ell=x_1+x_2+\cdots+x_n$ is a Lefschetz element of $A$.
\end{proposition}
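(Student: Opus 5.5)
The plan is to combine the Zariski-openness of the Lefschetz condition with the torus action that every monomial ideal admits. Throughout we assume $\Bbbk$ is infinite, as it is in the setting of Theorem~\ref{thm:mainTheorem_WLP}, where $\operatorname{char}(\Bbbk)=0$; this is needed for the density argument. One direction is trivial: if $\ell=x_1+\cdots+x_n$ is a Lefschetz element, then $A$ has the WLP by definition. So assume $A$ has the WLP, and we must show that $\ell$ itself is a Lefschetz element.

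First, we show that the set $U\subseteq[A]_1\cong\Bbbk^n$ of Lefschetz elements is Zariski open. Fix the monomial bases of the graded pieces $[A]_i$, which exist because $I$ is a monomial ideal. For a linear form $L=\sum a_jx_j$, the map $\times L:[A]_i\to[A]_{i+1}$ is then given by a matrix whose entries are linear forms in $a_1,\dots,a_n$. This map has maximal rank exactly when some maximal minor of the matrix is nonzero, which is an open condition on $(a_1,\ldots,a_n)$. Since $A$ is Artinian, only finitely many degrees $i$ occur, so $U$ is a finite intersection of open sets and hence open. By hypothesis $U$ is nonempty.

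Second, we use the torus action. For $\lambda=(\lambda_1,\dots,\lambda_n)\in(\Bbbk^*)^n$, the graded automorphism $\varphi_\lambda$ of $R$ given by $x_j\mapsto\lambda_jx_j$ sends every monomial to a nonzero scalar multiple of itself. Hence it preserves $I$ and descends to a graded automorphism of $A$. Because it is a graded automorphism, $\varphi_\lambda$ intertwines $\times L$ with $\times\varphi_\lambda(L)$ and so preserves the rank of every multiplication map. It follows that $\varphi_\lambda(U)=U$.

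Third, we combine the two. The torus orbit of $\ell$ is $\{\sum a_jx_j : a_j\neq 0 \text{ for all } j\}$, which is the complement of the union of the coordinate hyperplanes. This set is nonempty and open, so it is dense because $\Bbbk$ is infinite and $[A]_1\cong\Bbbk^n$ is irreducible. Two nonempty open subsets of an irreducible space intersect, so some $L=\sum a_jx_j$ with all $a_j\neq0$ lies in $U$. Applying $\varphi_\lambda$ with $\lambda_j=a_j^{-1}$ sends $L$ to $\ell$, and by the second step $\ell\in U$.

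The only delicate points are the openness of $U$ (the minors argument) and the need for $\Bbbk$ to be infinite. Neither is a serious obstacle; the key idea is the torus-equivariance that comes from $I$ being monomial.
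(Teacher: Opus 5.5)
Your argument is correct for an infinite field. It is essentially the standard proof of \cite[Proposition~2.2]{MMN11}: the Lefschetz locus is Zariski open, and rescaling by the torus $(\Bbbk^*)^n$ moves a Lefschetz element with all coefficients nonzero to $\ell$. The paper gives no proof of its own here. It only cites \cite{MMN11} for infinite fields and \cite{LN19} for arbitrary fields.

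The one real shortfall is scope. The statement puts no hypothesis on $\Bbbk$, and over a finite field the density argument in your third step breaks down. The Zariski topology on $\Bbbk^n$ is then discrete, so nothing forces the nonempty set $U$ to meet the locus where all coefficients are nonzero. Your restriction covers every use in the paper, since Section~4 assumes $\operatorname{char}(\Bbbk)=0$. The general case needs one more step, obtained by extending scalars to $\overline{\Bbbk}$:
\begin{itemize}
\item[(1)] If $L$ is a Lefschetz element of $A$, it remains one for $A\otimes_{\Bbbk}\overline{\Bbbk}$. The matrices of $\times L$ in the monomial bases have entries in $\Bbbk$, and neither matrix rank nor the dimensions of the graded pieces change under field extension.
\item[(2)] Your argument then shows that $\ell$ is a Lefschetz element of $A\otimes_{\Bbbk}\overline{\Bbbk}$.
\item[(3)] The matrices of $\times\ell$ have entries in $\{0,1\}$, so the same ranks are attained over $\Bbbk$.
\end{itemize}

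A smaller point: if some variable lies in $I$, then $[A]_1\not\cong\Bbbk^n$. Parametrize linear forms by $R_1\cong\Bbbk^n$ instead; the rest of your argument is unchanged.
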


\Cref{Proposition2.5} was first proved in \cite[Proposition~2.2]{MMN11} over an infinite field, and later in \cite[Proposition~4.3]{LN19} over an arbitrary field.

Next, we present a necessary condition for the WLP of an Artinian graded algebra. 
First, we introduce some notation.

\begin{definition}
Let $A = \bigoplus_{j \geq 0} [A]_j$ be a standard graded $\Bbbk$-algebra. 
	The \emph{Hilbert series} of $A$ is the power series 
	\[
	HS(A, t) = \sum_{j \geq 0} \dim_\Bbbk [A]_j \, t^j.
	\]
	If $A$ is an Artinian graded algebra, then $[A]_i = 0$ for all $i \gg 0$. 
	We denote 
	\[
	D := \max \{\, i \mid [A]_i \neq 0 \,\},
	\]
	and call $D$ the \emph{socle degree} of $A$. 
	In this case, the Hilbert series of $A$ is a polynomial of the form
	\[
	HS(A, t) = 1 + h_1 t + \cdots + h_D t^D,
	\]
	where $h_i = \dim_\Bbbk [A]_i > 0$ for all $i$. 
	By definition, the degree of the Hilbert series of an Artinian graded algebra $A$ coincides with its socle degree $D = \max \{\, i \mid [A]_i \neq 0 \,\}$.
\end{definition}

\begin{proposition}[{\rm \cite[Proposition~3.2]{HMMNWW13}}]\label{Proposition2.8}
If $A$ has the WLP, then the Hilbert series of $A$ is unimodal.
\end{proposition}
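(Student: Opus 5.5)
The plan is to let $\ell\in[A]_1$ be a Lefschetz element and turn the maximal-rank condition into the inequalities between consecutive values $h_i=\dim_\Bbbk[A]_i$ that unimodality requires. The only structural input is that $A$ is standard graded, so $[A]_{j+1}=[A]_1\cdot[A]_j$ for every $j\ge 0$. No result earlier in the paper is needed.

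\textbf{Step 1: surjectivity propagates upward.} Suppose $\times\ell:[A]_i\to[A]_{i+1}$ is surjective, i.e.\ $\ell[A]_i=[A]_{i+1}$. Using commutativity,
\[
[A]_{i+2}=[A]_1[A]_{i+1}=[A]_1\,\ell\,[A]_i=\ell\,[A]_1[A]_i=\ell\,[A]_{i+1},
\]
so $\times\ell:[A]_{i+1}\to[A]_{i+2}$ is surjective as well. By induction, $\times\ell:[A]_j\to[A]_{j+1}$ is surjective for every $j\ge i$.

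\textbf{Step 2: split into an injective range and a surjective range.} If every map $\times\ell:[A]_i\to[A]_{i+1}$ is injective, then $h_0\le h_1\le\cdots\le h_D$, which is already unimodal. Otherwise, let $k$ be the smallest index for which $\times\ell:[A]_k\to[A]_{k+1}$ is not injective.
\begin{itemize}
\item For $i<k$ the map is injective, so $h_i\le h_{i+1}$.
\item At $i=k$ the map must be surjective by the WLP, and Step~1 then makes every map with $i\ge k$ surjective, so $h_i\ge h_{i+1}$ for all $i\ge k$.
\end{itemize}
This gives
\[
h_0\le h_1\le\cdots\le h_k\ge h_{k+1}\ge\cdots\ge h_D,
\]
so the Hilbert series is unimodal.

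The one point needing care, and the only real obstacle, is Step~1. Without it, the WLP alone would permit an alternating pattern of injective and surjective maps, and unimodality could fail. The argument in Step~1 works precisely because $A$ is generated in degree one and $\ell$ commutes with $[A]_1$. Maps that are bijective cause no trouble, since they satisfy both inequalities.
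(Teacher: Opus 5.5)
Your proof is correct, and it is essentially the standard argument. The paper gives no proof of its own and only cites Harima et al.; the usual proof of that result rests on the same point as yours: surjectivity of $\times\ell$ in one degree propagates to all higher degrees because $[A]_{j+1}=[A]_1[A]_j$, often phrased as ``$A/\ell A$ is standard graded, so once $[A/\ell A]_{i+1}=0$ it vanishes in all higher degrees.'' (Your ``all maps injective'' case is actually vacuous, since $\times\ell:[A]_D\to[A]_{D+1}=0$ is never injective, but including it does no harm.)
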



\subsection{Artinian algebras associated to graphs}
We retain the notation $A(G)$ for the Artinian algebra associated to
a graph $G$, as introduced in Section~1. The following result relates
the combinatorial structure of $G$ to the Hilbert series of $A(G)$.

\begin{proposition}[{\rm \cite{NT24b}}]
The Hilbert series of $A(G)$ coincides with the independence polynomial of $G$; that is,
	\[
	HS(A(G), t) = I(G; t).
	\]
\end{proposition}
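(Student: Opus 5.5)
The identity is proved by comparing graded pieces dimension by dimension. Since $I=(x_1^2,\ldots,x_n^2)+I(G)$ is a monomial ideal, each graded piece $[A(G)]_k$ has a $\Bbbk$-basis consisting of the images of the degree-$k$ monomials not lying in $I$. So $\dim_\Bbbk [A(G)]_k$ equals the number of such standard monomials, and it suffices to construct a bijection between these monomials and the $k$-independent sets of $G$.

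\emph{Standard monomials are exactly the squarefree independent ones.} A monomial $x^{a}=x_1^{a_1}\cdots x_n^{a_n}$ lies outside $I$ if and only if it is divisible by none of the generators. Because the generators are monomials, this means:
\begin{itemize}
\item $x^a$ is not divisible by any $x_i^2$, so $x^a$ is squarefree, say $x^a=\prod_{i\in X}x_i$ for some $X\subseteq V$ with $|X|=k$;
\item $x^a$ is not divisible by any $x_ix_j$ with $\{i,j\}\in E$, so no two elements of $X$ are adjacent.
\end{itemize}
Hence $x^a\notin I$ exactly when $X$ is an independent set of $G$.

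\emph{The bijection.} Conversely, every independent set $X$ of size $k$ gives the squarefree monomial $\prod_{i\in X}x_i$, which by the same reasoning lies outside $I$. The maps $X\mapsto\prod_{i\in X}x_i$ and back are mutually inverse bijections between $k$-independent sets and standard monomials of degree $k$. Therefore $\dim_\Bbbk[A(G)]_k=s_k(G)$ for every $k$.

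\emph{Conclusion.} Summing over $k$ gives $HS(A(G),t)=I(G;t)$. As a consistency check, the socle degree of $A(G)$ equals $\alpha(G)$.

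No step is genuinely difficult. The only point that needs justification is that standard monomials form a basis of a quotient by a monomial ideal. This holds because $I$ is spanned as a vector space by the monomials it contains, so the monomials outside $I$ map to a basis of $R/I$.
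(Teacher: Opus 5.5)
Your argument is correct and complete: the monomials outside $(x_1^2,\ldots,x_n^2)+I(G)$ are exactly the squarefree monomials whose support is an independent set, and these monomials form a basis of each graded piece, so $\dim_\Bbbk[A(G)]_k=s_k(G)$. The paper does not prove this itself but quotes it from \cite{NT24b}, and your standard-monomial argument is the usual proof of that fact.
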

Nguyen and the first author in~\cite{NT24b} studied the independence polynomials of path graphs $P_n$ on $n$ vertices as well as their associated Artinian algebras. 
Recall that the independence
polynomial of the path $P_n$ has mode $\lambda_n$.

\begin{theorem}[{\rm \cite[Theorem~4.2]{NT24b}}]\label{WLP for P_n}
Suppose that $\operatorname{char}(\Bbbk) = 0$. Then the algebra $A(P_n)$ has the WLP if and only if $n \in \{1, 2, \dots, 7, 9, 10, 13\}.$
Furthermore, if $n\geq17$, then multiplication by the sum $\ell$ of
the variables is not surjective from degree $\lambda_n$ to degree
$\lambda_n+1$; that is,
\[
\times\ell:[A(P_n)]_{\lambda_n}
\longrightarrow[A(P_n)]_{\lambda_n+1}
\]
is not surjective.
\end{theorem}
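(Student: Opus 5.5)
Since $I(P_n;t)$ is the Hilbert series of $A(P_n)$ and $P_n$ is claw-free, the theorem of Chudnovsky and Seymour shows that $I(P_n;t)$ is real-rooted, hence log-concave and unimodal with mode $\lambda_n$. By \Cref{Proposition2.5} it suffices to test $\ell=x_1+\cdots+x_n$. The plan has three parts. First, for $n\geq 17$ I prove the non-surjectivity claim by exhibiting an explicit obstruction in the Macaulay inverse system. Second, I deduce the failure of the WLP for these $n$. Third, the finitely many remaining values $n\leq 16$ are settled by direct computation.

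\emph{The obstruction.} Work in the divided-power inverse system $\Bbbk[y_1,\ldots,y_n]$, on which the $x_i$ act by contraction. The inverse system of $A(P_n)$ is spanned by the squarefree monomials $y_S$ with $S$ an independent set of $P_n$. The map $\times\ell:[A]_{i}\to[A]_{i+1}$ fails to be surjective exactly when some nonzero form $F$ of degree $i+1$ in the inverse system satisfies $\ell\circ F=0$. Set $k=\lambda_n+1$ and consider the edges $\{1,2\},\{4,5\},\ldots,\{3k-2,3k-1\}$. This requires $3k-1\leq n$. Define
\[
F=\prod_{j=1}^{k}\bigl(y_{3j-2}-y_{3j-1}\bigr).
\]
Every monomial of $F$ picks one vertex from each chosen edge. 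Vertices taken from different edges are at distance at least $2$ in the path, so each such monomial is indexed by an independent set and $F$ lies in the inverse system. The form $F$ is clearly nonzero of degree $k$. Contraction by any $x_w$ with $w$ outside the chosen edges kills $F$. For the two endpoints of the $j$-th edge, $x_{3j-2}\circ F$ and $x_{3j-1}\circ F$ are negatives of each other. Hence $\ell\circ F=0$, and $\times\ell:[A(P_n)]_{\lambda_n}\to[A(P_n)]_{\lambda_n+1}$ is not surjective.

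\emph{The numerical condition.} It remains to verify $3(\lambda_n+1)-1\leq n$ for all $n\geq 17$, using the closed formula for $\lambda_n$. Since $\lambda_n\approx\frac{5-\sqrt5}{10}n\approx 0.276n$, the left side is about $0.83n+2$, so the inequality holds for large $n$ with room to spare. At $n=17$ it is tight: $\lambda_{17}=\lceil(87-\sqrt{1809})/10\rceil=5$, and $3\cdot 6-1=17$. The bookkeeping step I expect to need the most care is turning the ceiling into a uniform bound valid for every $n\geq17$. I would show directly that $\frac{5n+2-\sqrt{5n^2+20n+24}}{10}\leq\frac{n-2}{3}$ for $n\geq 17$, which reduces to a quadratic inequality, and then check by hand any residual cases where the ceiling pushes the value past $\frac{n+1}{3}-1$.

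\emph{Conclusion.} Because $\lambda_n$ is the mode, $h_{\lambda_n}\geq h_{\lambda_n+1}$, so maximal rank in that degree would mean surjectivity. Therefore $A(P_n)$ fails the WLP for every $n\geq 17$. For $n\leq 16$ the algebras are small and I would decide the WLP by computing the ranks of $\times\ell$ in each degree, for instance in \texttt{Macaulay2}. This computation should confirm the WLP exactly for $n\in\{1,\ldots,7,9,10,13\}$ and its failure for $n\in\{8,11,12,14,15,16\}$. For these failing values one could also look for an analogous explicit annihilated form.
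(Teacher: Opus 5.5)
Your argument is correct. The paper gives no proof of this statement; it is quoted from \cite{NT24b}. So the only comparison available is with the tools the paper uses elsewhere.

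\emph{Relation to the paper's tools.} Your form $F=\prod_{j=1}^{k}(y_{3j-2}-y_{3j-1})$ is the inverse-system dual of a quotient argument of the type in Lemma~\ref{lem:failure_transfer}. Killing the variables off the induced matching gives a surjection $A(P_n)\twoheadrightarrow A(kP_2)$. In $A(kP_2)$, the image of $\times\ell$ into the top degree $k$ is annihilated by the functional $\prod_j c_j\mapsto(-1)^{\#\{j:\,c_j=x_{3j-1}\}}$.

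Your construction is also the mirror image of Lemma~\ref{lem:block-annihilator}. An element of $A$ killed by $\ell$ must involve every variable, which explains the blocks along a spanning path and the degree $\lceil N/4\rceil$; that element detects failures of injectivity. In the inverse system, variables outside the support of $F$ act by zero, so a sparse induced matching suffices, and you detect failures of surjectivity. What your version buys is a characteristic-free general statement: an induced matching of size $k$ in any graph $G$ forces $\times\ell:[A(G)]_{k-1}\to[A(G)]_k$ to be non-surjective.

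\emph{The ceiling step needs repair.} The inequality $\frac{5n+2-\sqrt{5n^2+20n+24}}{10}\le\frac{n-2}{3}$ does not control the ceiling when $3\nmid n-2$. As stated, it leaves infinitely many residual cases. Prove the bound with $\frac{n-4}{3}$ instead. After squaring $5n+46\le3\sqrt{5n^2+20n+24}$, it is equivalent to $(n-19)(n+5)\ge0$. Hence for $n\ge19$,
\[
\lambda_n\le\left\lceil\tfrac{n-4}{3}\right\rceil\le\tfrac{n-2}{3},
\]
which is exactly $3(\lambda_n+1)-1\le n$. The cases $n=17,18$ follow from $\lambda_{17}=\lambda_{18}=5$.

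\emph{Small $n$.} For $n\le16$, the positive cases genuinely require computation. The matrices have integer entries, so ranks computed over $\mathbb{Q}$ hold over every field of characteristic zero. The negative cases can be done by hand:
\begin{itemize}
\item your obstruction covers $n=8,11,14,15$, where $3\lambda_n+2\le n$;
\item the construction of Lemma~\ref{lem:block-annihilator}, applied to the path itself, covers $n=12$ and $n=16$, where $h_3=120<126=h_4$ and $h_4=715<792=h_5$ respectively.
\end{itemize}
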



\section{Independence polynomials of the tadpole graphs}

In this section, we prove that the independence polynomial of every
tadpole graph is unimodal and then bound its mode. We begin with a
tail construction for an arbitrary graph.
\begin{definition}
Let $G$ be a graph and let $v$ be a vertex of $G$. For each $n\geq1$,
the \emph{$n$-tail graph of $G$ at $v$}, denoted by
$\tail{G,v,n}$, is obtained by joining $v$ to an endpoint of a path
$P_n$ by a bridge; see Figure~\ref{fig2}.
\end{definition}

\begin{figure}[!ht]
\begin{tabular}{cc}
\begin{tikzpicture}[every edge/.style = {draw=black,very thick},vrtx/.style args = {#1/#2}{circle, draw, thick, fill=black,minimum size=1mm, label=#1:#2}]
\node (n1) [vrtx=above/]  at (-1,0) {};
\node (n2) [vrtx=above/]at (1,0)  {};
\node (n3) [vrtx=below/$v$]at (0,-1.5)  {};
\node (n4) [vrtx=below/]at (1,-3) {};
\node (n5) [vrtx=below/]at (-1,-3)  {};
\node (n6) [vrtx=left/]at (-2,-1.5)  {};
\foreach \from/\to in {n1/n3,n2/n3,n3/n4,n3/n5,n5/n6,n6/n1}		
\draw (\from) -- (\to);	
\end{tikzpicture} &
\begin{tikzpicture}[every edge/.style = {draw=black,very thick},vrtx/.style args = {#1/#2}{circle, draw, thick, fill=black,minimum size=1mm, label=#1:#2}]
\node (n1) [vrtx=above/]  at (-1,0) {};
\node (n2) [vrtx=above/]at (1,0)  {};
\node (n3) [vrtx=below/$v$]at (0,-1.5)  {};
\node (n4) [vrtx=below/]at (1,-3) {};
\node (n5) [vrtx=below/]at (-1,-3)  {};
\node (n6) [vrtx=left/]at (-2,-1.5)  {};
\node (n7) [vrtx=below/$v_1$]at (1.5,-1.5)  {};
\node (n8) [vrtx=below/$v_2$]at (3,-1.5)  {};
\node (n9) [vrtx=below/$v_3$]at (4.5,-1.5)  {};
\node (n10) [vrtx=below/$v_4$]at (6,-1.5)  {};
\node (n11) [vrtx=below/$v_5$]at (7.5,-1.5)  {};
\foreach \from/\to in {n1/n3,n2/n3,n3/n4,n3/n5,n5/n6,n6/n1, n7/n3, n8/n7, n8/n9, n9/n10, n10/n11}		
\draw (\from) -- (\to);	
\end{tikzpicture}
\end{tabular}
\caption{$G$ (left) and $\tail{G,v,5}$ (right)}
\label{fig2}
\end{figure}

The unimodality of the independence polynomial of the $n$-tail graph of $G$ may be shown by induction as follows. Whenever $I(\tail{G,v,n};t)$ is unimodal, denote its mode by
$\nu_n^v(G)$. The following result propagates unimodality along the
tail.

\begin{theorem}\label{unimodal_tail}
Let $G$ be a graph and let $v$ be a vertex of $G$. Assume that the independence polynomials $I(\tail{G,v,1};t)$ and $I(G;t)$ are unimodal with modes $\nu_1^v(G)$ and $\nu$, respectively, satisfying $0\le \nu_1^v(G)- \nu \leq 1$. Then $I(\tail{G,v,n};t)$ is unimodal for all $n\geq 1$. Moreover, $0\le \nu_n^v(G)- \nu_{n-1}^v(G)\leq 1$ for all $n\geq 2$. 
\end{theorem}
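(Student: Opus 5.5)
Write $H_n=\tail{G,v,n}$ and set $H_0=G$, so that $\nu_0^v(G)=\nu$. The plan is to prove by strong induction on $n\ge1$ that $I(H_n;t)$ is unimodal and that
\[
0\le \nu_n^v(G)-\nu_{n-1}^v(G)\le 1 .
\]
For $n=1$ this is exactly the hypothesis.

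\textbf{Recurrence.} Let $w$ be the free endpoint of the tail of $H_n$. Apply Proposition~\ref{FormulaforIndependence}(i) at $w$. Deleting $w$ leaves $H_{n-1}$. Deleting $N[w]$ removes $w$ and its unique neighbour, which leaves $H_{n-2}$ when $n\ge2$. Hence
\[
I(H_n;t)=I(H_{n-1};t)+t\,I(H_{n-2};t)\qquad(n\ge2).
\]
If $I(H_{n-2};t)$ is unimodal with mode $\nu_{n-2}$, then $t\,I(H_{n-2};t)$ is unimodal with mode $\nu_{n-2}+1$; here $\nu_k:=\nu_k^v(G)$.

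\textbf{Inductive step.} Assume the claim holds for $n-1$, and also for all smaller indices. Then $0\le\nu_{n-1}-\nu_{n-2}\le1$, so
\[
(\nu_{n-2}+1)-\nu_{n-1}\in\{0,1\}.
\]
Lemma~\ref{unimodal_mode_unit} therefore applies to $f=I(H_{n-1};t)$ and $g=t\,I(H_{n-2};t)$. It shows that $I(H_n;t)$ is unimodal, with mode in $\{m,m+1\}$, where
\[
m=\min(\nu_{n-1},\nu_{n-2}+1)=\nu_{n-1}.
\]
Thus $\nu_{n-1}\le\nu_n\le\nu_{n-1}+1$, which is the required bound. This closes the induction for every $n\ge2$, and unimodality holds for all $n\ge1$.

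\textbf{Points to check.} The main things to verify are the base of the recursion at $n=2$ and the exact form of the mode conclusion in Lemma~\ref{unimodal_mode_unit}. At $n=2$, the $H_{n-2}$ term is $G$ itself, with mode $\nu$. The hypothesis $0\le\nu_1-\nu\le1$ is exactly what the lemma needs there, which explains why the theorem assumes it. For the lemma's conclusion, the mode is defined as the first index attaining the maximum, and shifting by $t$ raises it by exactly one. Lemma~\ref{lem_compare_mode} gives an alternative route to the bound $\nu_{n-1}\le\nu_n\le\nu_{n-2}+1\le\nu_{n-1}+1$, and can serve as a cross-check.
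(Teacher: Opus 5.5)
Your proof is correct and follows the paper's argument. You use the same recurrence $I(\tail{G,v,n};t)=I(\tail{G,v,n-1};t)+t\,I(\tail{G,v,n-2};t)$, obtained by deleting the free endpoint of the tail, and then apply Lemma~\ref{unimodal_mode_unit}. The only difference is cosmetic: by setting $H_0=G$ and noting that $\min(\nu_{n-1},\nu_{n-2}+1)=\nu_{n-1}$, you handle the base case $n=2$ and both of the paper's cases ($\nu_{n-1}=\nu_{n-2}$ versus $\nu_{n-1}=\nu_{n-2}+1$) in one step.
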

\begin{proof}
The consecutive vertices of $P_n$ are labeled as $v_1,v_2,\ldots,v_n$ such that $v$ and $v_1$ are connected by a bridge (see \Cref{fig2}).  We proceed by induction on $n\ge 2$.
 
If $n=2$, then by applying \Cref{FormulaforIndependence}(i) to the vertex numbered $v_2$
\begin{align*}
 I(\tail{G,v,2};t) &= I(\tail{G,v,2}\setminus v_2;t)+ tI(\tail{G,v,2}\setminus N[v_2];t)\\
  &= I(\tail{G,v,1};t) + tI(G;t).
 \end{align*}
If $\nu_1^v(G)= \nu$, then $I(\tail{G,v,2};t)$ is unimodal with mode $\nu_2^v(G)\in \brb{\nu,\nu+1}$ by \Cref{unimodal_mode_unit}; if $\nu_1^v(G)=\nu+1$, then it is immediate that $I(\tail{G,v,2};t)$ is unimodal with mode $\nu+1$. In both cases, we have $0\leq \nu_2^v(G)- \nu_1^v(G)\leq 1$. 

Assume that the assertion holds for every integer $k<n$, where $n\geq3$.
We prove it for $n$. Applying \Cref{FormulaforIndependence}(i) to the
vertex $v_n$, we obtain
\begin{align*}
I(\tail{G,v,n};t)
&=
I(\tail{G,v,n}\setminus v_n;t) +tI(\tail{G,v,n}\setminus N[v_n];t) \\
&=
I(\tail{G,v,n-1};t)
+tI(\tail{G,v,n-2};t).
\end{align*}
By the induction hypothesis, both
$I(\tail{G,v,n-1};t)$ and
$I(\tail{G,v,n-2};t)$ are unimodal and
\[
0\leq
\nu_{n-1}^v(G)-\nu_{n-2}^v(G)
\leq1.
\]
We distinguish the following two cases.

\noindent \textbf{Case 1: $\nu_{n-1}^v(G)=\nu_{n-2}^v(G)$.} In this case, the modes of
	$I(\tail{G,v,n-1};t)$ and
	$tI(\tail{G,v,n-2};t)$ are
	$\nu_{n-1}^v(G)$ and $\nu_{n-1}^v(G)+1$, respectively.
	By \Cref{unimodal_mode_unit},
	$I(\tail{G,v,n};t)$ is unimodal, and its mode satisfies
	\[
	\nu_n^v(G)\in
	\brb{\nu_{n-1}^v(G),\nu_{n-1}^v(G)+1}.
	\]
	Therefore,
	$0\leq \nu_n^v(G)-\nu_{n-1}^v(G)\leq 1$.

\noindent
\textbf{Case 2: $\nu_{n-1}^v(G)=\nu_{n-2}^v(G)+1$.}
The two summands in the recurrence have the same mode,
namely $\nu_{n-1}^v(G)$. Their sum is therefore unimodal
with mode
\[
\nu_n^v(G)=\nu_{n-1}^v(G).
\]
Thus $0\leq\nu_n^v(G)-\nu_{n-1}^v(G)\leq1$.

This completes the induction and hence the proof.
\end{proof}

Since $\tail{C_m,v,n}$ is exactly the tadpole graph $T_{m,n}$, we obtain the following.

\begin{corollary}\label{cor:unimodality_Tadpoles}
 The independence polynomial of $T_{m,n}$ is unimodal for all $m\geq 3$ and $n\geq 1$. 
\end{corollary}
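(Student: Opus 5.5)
We plan to apply \Cref{unimodal_tail} with $G=C_m$ and $v=x_m$. Since $\tail{C_m,x_m,n}=T_{m,n}$, the corollary will follow once the two hypotheses of that theorem are verified. These are: $I(C_m;t)$ and $I(T_{m,1};t)$ are unimodal, and their modes $\nu$ and $\nu_1^{x_m}(C_m)$ satisfy $0\le \nu_1^{x_m}(C_m)-\nu\le 1$.

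For unimodality, we note that both $C_m$ and $T_{m,1}$ are claw-free. Every vertex of $C_m$ has degree $2$. In $T_{m,1}$ the only vertex of degree $3$ is $x_m$, and its neighbours $x_1,x_{m-1},y_1$ include the adjacent pair $x_1,x_{m-1}$ when $m=3$. For $m\ge 4$, however, $x_m$ together with $x_1,x_{m-1},y_1$ induces a claw, so the claw-free route fails for the tadpole. We therefore use the recursion instead. Applying \Cref{FormulaforIndependence}(i) at the leaf $y_1$ gives
\[
I(T_{m,1};t)=I(C_m;t)+tI(P_{m-1};t).
\]
Both $C_m$ and $P_{m-1}$ are claw-free, so by the theorem of Chudnovsky and Seymour their independence polynomials have only real zeros. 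By Newton's inequalities they are then log-concave with positive coefficients, hence unimodal. Let $\nu$ denote the mode of $I(C_m;t)$ and $\lambda_{m-1}$ the mode of $I(P_{m-1};t)$.

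Next we compare $\nu$ with $\lambda_{m-1}$. Applying \Cref{FormulaforIndependence}(i) at a vertex of $C_m$ gives $I(C_m;t)=I(P_{m-1};t)+tI(P_{m-3};t)$. The same recursion at an endpoint of a path gives $I(P_{m-1};t)=I(P_{m-2};t)+tI(P_{m-3};t)$. Since \Cref{unimodal_tail} applied to $G=P_1$ shows that path modes increase by $0$ or $1$ at each step, the modes of $I(P_{m-1};t)$ and $tI(P_{m-3};t)$ differ by at most $1$. By \Cref{unimodal_mode_unit}, $\nu\in\{\mu',\mu'+1\}$ with $\mu'=\min(\lambda_{m-1},\lambda_{m-3}+1)$, and $\nu$ lies between $\lambda_{m-1}$ and $\lambda_{m-3}+1$ by \Cref{lem_compare_mode}. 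Because $\lambda_{m-1}-\lambda_{m-3}\in\{0,1,2\}$, it follows that $\nu\in\{\lambda_{m-1},\lambda_{m-1}+1\}$.

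The modes of $I(C_m;t)$ and $tI(P_{m-1};t)$ are $\nu$ and $\lambda_{m-1}+1$, and these differ by at most $1$. Hence \Cref{unimodal_mode_unit} shows that $I(T_{m,1};t)$ is unimodal with mode in $\{\min(\nu,\lambda_{m-1}+1),\min(\nu,\lambda_{m-1}+1)+1\}$. Combined with \Cref{lem_compare_mode}, this gives
\[
\nu_1^{x_m}(C_m)\in[\nu,\lambda_{m-1}+1].
\]
If $\nu=\lambda_{m-1}+1$, this forces $\nu_1^{x_m}(C_m)=\nu$. If $\nu=\lambda_{m-1}$, we get $\nu_1^{x_m}(C_m)\in\{\nu,\nu+1\}$. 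In both cases $0\le \nu_1^{x_m}(C_m)-\nu\le1$, and \Cref{unimodal_tail} yields unimodality for all $n\ge1$.

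The main obstacle is the mode bookkeeping. It relies on the fact that path modes increase by at most one per step, and that fact comes from \Cref{unimodal_tail} applied to a single vertex. For that application we check the base directly: $I(P_1;t)=1+t$ has mode $1$, $I(P_2;t)=1+2t$ has mode $1$, and so $0\le 1-1\le 1$. The case $m=3$ is also covered directly, since $T_{3,1}$ is claw-free.
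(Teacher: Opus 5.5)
Your overall route is the paper's: apply Theorem~\ref{unimodal_tail} with $G=C_m$ and check the base hypothesis on $C_m$ and $T_{m,1}$. The paper does this by citing~\cite{NT24b}: the mode $\rho_m$ of $I(C_m;t)$, and, for $m\ge5$, the fact that the mode of $I(T_{m,1};t)$ lies in $\{\rho_m,\rho_m+1\}$. It checks $m=3,4$ by hand. You instead derive the base hypothesis internally from $I(T_{m,1};t)=I(C_m;t)+tI(P_{m-1};t)$ and $I(C_m;t)=I(P_{m-1};t)+tI(P_{m-3};t)$. That is a reasonable, more self-contained route, but the mode bookkeeping has a hole as written.

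The faulty step is ``because $\lambda_{m-1}-\lambda_{m-3}\in\{0,1,2\}$, it follows that $\nu\in\{\lambda_{m-1},\lambda_{m-1}+1\}$.'' Suppose $\lambda_{m-1}-\lambda_{m-3}=2$. Then the summands $I(P_{m-1};t)$ and $tI(P_{m-3};t)$ have modes $\lambda_{m-1}$ and $\lambda_{m-1}-1$, and Lemmas~\ref{unimodal_mode_unit} and~\ref{lem_compare_mode} give only $\nu\in\{\lambda_{m-1}-1,\lambda_{m-1}\}$. If $\nu=\lambda_{m-1}-1$, then $\nu$ and the mode $\lambda_{m-1}+1$ of $tI(P_{m-1};t)$ differ by $2$. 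Lemma~\ref{unimodal_mode_unit} then no longer applies, and your argument yields neither unimodality of $I(T_{m,1};t)$ nor $0\le\nu_1^{x_m}(C_m)-\nu\le1$.

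The statement of Theorem~\ref{unimodal_tail} only controls single steps, so you must separately show $\lambda_{k+2}\le\lambda_k+1$. This is true and easy, in either of two ways. Case~2 of the proof of Theorem~\ref{unimodal_tail}, together with its $n=2$ step, shows that a step at which the mode increases is always followed by a step at which it stays put. Alternatively, with $g(k)=\bigl(5k+2-\sqrt{5k^2+20k+24}\bigr)/10$ one has $g(k+2)-g(k)=1-\bigl(\sqrt{5(k+2)^2+20(k+2)+24}-\sqrt{5k^2+20k+24}\bigr)/10<1$, hence $\lambda_{k+2}\le\lambda_k+1$. With $\lambda_{m-1}-\lambda_{m-3}\in\{0,1\}$, your case analysis does give $\nu\in\{\lambda_{m-1},\lambda_{m-1}+1\}$, and the rest of your argument goes through.

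Two smaller points. First, under Definition~\ref{mode} the mode is the first index at which the maximum is attained. So $1+t$ has mode $0$, not $1$; indeed $\lambda_1=0$, as in Remark~\ref{rem:sharp-mode-bounds}. Your base check for $G=P_1$ becomes $\lambda_2-\lambda_1=1$, which still satisfies the hypothesis. Second, for $m=3$, claw-freeness of $T_{3,1}$ gives unimodality but not the mode condition. Either run your recursion with $I(P_0;t)=1$ and $\lambda_0=0$, or note directly that $1+3t$ and $1+4t+2t^2$ both have mode $1$.

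Once the mode differences are controlled, the Chudnovsky--Seymour input is unnecessary, since Lemma~\ref{unimodal_mode_unit} already gives unimodality of $I(C_m;t)$. With the Case~2 fix, the repaired argument is self-contained relative to this paper. The paper's proof is shorter but leans on the external mode estimates for $C_m$ and $T_{m,1}$.
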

\begin{proof}
	By~\cite[Proposition~3.3]{NT24b}, the independence
	polynomial of $C_m$ is unimodal with mode
	\[
	\rho_m=
	\left\lceil
	\frac{5m-4-\sqrt{5m^2-4}}{10}
	\right\rceil.
	\]
	For $m\geq5$, the mode estimates in
	\cite[Lemmas~3.4--3.6]{NT24b} imply that the
	independence polynomial of the pan graph $T_{m,1}$
	is unimodal with mode in $\{\rho_m,\rho_m+1\}$.
	
	For $m=3,4$, we have
	\[
	I(T_{3,1};t)=1+4t+2t^2,
	\qquad
	I(T_{4,1};t)=1+5t+5t^2+t^3.
	\]
	Both polynomials have mode $1$, and
	$\rho_3=\rho_4=1$. Thus the hypotheses of
	Theorem~\ref{unimodal_tail} hold for every $m\geq3$.
	Since $\tail{C_m,v,n}=T_{m,n}$, the result follows.
\end{proof}

To study the mode of the independence polynomial of $T_{m,n}$, we need the following. Recall that the mode of the independence polynomial of $P_n$ is $\lambda_n=\ceil{\frac{5n+2-\sqrt{5n^2+20n+24}}{10}}$, see~\cite[Proposition~3.1]{NT24b}.
\begin{lemma}\label{inequality_lambda}
For all $m,n\geq 1$, we have $\abs{\lambda_{m+n+2}-\lambda_m-\lambda_n}\leq 1$. 
\end{lemma}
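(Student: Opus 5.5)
The plan is to compare the unrounded quantities inside the ceilings and reduce the statement to an elementary estimate. For $k\ge 1$ set
\[
f(k)=\frac{5k+2-\sqrt{5k^2+20k+24}}{10},
\qquad\text{so that}\qquad \lambda_k=\ceil{f(k)}.
\]
The key observation is that $5k^2+20k+24=5(k+2)^2+4$, so the square root is very close to $\sqrt5\,(k+2)$. I will write
\[
\varepsilon_k=\frac{\sqrt{5(k+2)^2+4}-\sqrt5\,(k+2)}{10}=\frac{4}{10\bigl(\sqrt{5(k+2)^2+4}+\sqrt5\,(k+2)\bigr)},
\]
which gives $0<\varepsilon_k\le \frac{1}{5\sqrt5\,(k+2)}\le \frac{1}{15\sqrt5}<\frac1{30}$ for $k\ge1$. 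Substituting, $f$ becomes an affine function up to this small error:
\[
f(k)=ck+d-\varepsilon_k,\qquad c=\tfrac12-\tfrac{1}{2\sqrt5},\quad d=\tfrac15-\tfrac1{\sqrt5}.
\]

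Next I will compute the defect $D:=f(m+n+2)-f(m)-f(n)$. The linear parts cancel except for $2c$, and the constants leave $d-2d=-d$. Hence
\[
D=2c-d+\varepsilon_m+\varepsilon_n-\varepsilon_{m+n+2}
=\tfrac45+\varepsilon_m+\varepsilon_n-\varepsilon_{m+n+2}.
\]
The bounds on $\varepsilon_k$ then give $\tfrac45-\tfrac1{30}<D<\tfrac45+\tfrac1{15}$. In particular $0\le D\le 1$.

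Finally, I will use the elementary facts $\ceil{x}+\ceil{y}-1\le\ceil{x+y}\le\ceil{x}+\ceil{y}$ and $\ceil{z}\le\ceil{z+D}\le\ceil{z}+1$, valid for $0\le D\le1$. With $x=f(m)$, $y=f(n)$ and $z=x+y$ these yield
\[
\lambda_m+\lambda_n-1\le\ceil{x+y}\le\lambda_{m+n+2}=\ceil{x+y+D}\le\ceil{x+y}+1\le\lambda_m+\lambda_n+1,
\]
which is exactly the claim.

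The main point requiring care is the exact cancellation $2c-d=\tfrac45$, together with the uniform bound on $\varepsilon_k$ that keeps $D$ inside $[0,1]$. Both are direct computations; the rest of the argument is routine.
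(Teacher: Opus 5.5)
Your proof is correct, and its skeleton matches the paper's. Both arguments show that the unrounded defect $D=f(m+n+2)-f(m)-f(n)$ lies in $[0,1]$, and both then conclude with $\lceil x\rceil+\lceil y\rceil-1\le\lceil x+y\rceil\le\lceil x\rceil+\lceil y\rceil$.

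The difference is in how the two bounds on $D$ are verified. The paper treats them separately and by squaring, writing $A(d)=\sqrt{5d^2+20d+24}$:
\begin{itemize}
\item for $D\le 1$, it rewrites the inequality as $2+A(m+n+2)\ge A(m)+A(n)$ and squares twice;
\item for $D\ge 0$, it rewrites the inequality as $A(m+n+2)\le 8+A(m)+A(n)$ and squares once.
\end{itemize}
You instead use $5k^2+20k+24=5(k+2)^2+4$ to write $f$ as an affine function minus a small error $\varepsilon_k\in(0,1/30)$. The exact cancellation $2c-d=4/5$ then gives both bounds at once, namely $23/30<D<13/15$.

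Your route is shorter and avoids the squaring bookkeeping. It also explains why the lemma holds with room to spare: the defect is essentially $4/5$, uniformly in $m,n$. The paper's route uses only exact algebraic manipulations and needs no numerical comparison such as $15\sqrt5>30$, but it gives no such quantitative picture of $D$.
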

\begin{proof}
It suffices to prove the following two claims.

\noindent\begin{claim}\label{claim 1}
    $\lambda_{m+n+2}\leq \lambda_m+\lambda_n+1$.
\end{claim}

\noindent\emph{Proof of Claim 1:}
For $d\geq 1$, set $A(d)= \sqrt{5d^2 + 20d+24}$. It suffices to show that
\begin{align*}
    1 + \frac{5m+2- A(m)}{10} + \frac{5n+2-A(n)}{10}\geq \frac{5(m+n+2)+2-A(m+n+2)}{10}.
\end{align*}
This is equivalent to showing that
\begin{align}
    2 + A(m+n+2)\geq A(m) + A(n). \label{1}
\end{align}

Since both sides of \eqref{1} are nonnegative, we may square them. After simplification, we obtain the equivalent inequality
\begin{align}
    2A(m+n+2)+5mn+10(m+n+2)
    \geq A(m)A(n).
    \label{2}
\end{align}

		Squaring both sides of (\ref{2}) and simplifying, we obtain the equivalent inequality
        \begin{align*}
            20(2mn + 4m+ 4n) + 160 + 2A(m+n+2)(10mn + 20m +20n +40)\geq 0.
        \end{align*}
		This inequality holds for all $m\geq 1$ and $n\geq 1$. Taking ceilings and using
$$\lceil x+y\rceil\leq\lceil x\rceil+\lceil y\rceil,$$ we obtain
\[
\lambda_{m+n+2}
\leq \lambda_m+\lambda_n+1,
\]
which completes the proof of Claim~\ref{claim 1}.
        
        Next, we have the following.

\noindent\begin{claim}\label{claim 2}
    $\lambda_m+\lambda_n-1\leq \lambda_{m+n+2}$.
\end{claim}

\noindent\emph{Proof of Claim 2:}		
Retaining the notation $A(d)= \sqrt{5d^2+20d+24}$, we first show that
\begin{align}
    5(m+n)+4- A(m) - A(n)\leq 5(m+n+2)+2 - A(m+n+2) \label{5}
\end{align}
This is equivalent to showing that
\begin{align}
     A(m+n+2)\leq 8 + A(m) + A(n). \label{3}
\end{align}

		Squaring both sides of (\ref{3}) and simplifying, we obtain the equivalent inequality
        \begin{align}
            5mn + 10(m+n)\leq 14 + A(m)A(n) + 8A(m) + 8A(n). \label{4}
        \end{align}
        Since $A(m)A(n) > 5mn$ and $8A(m)+ 8A(n)> 10(m+n)$ for all $m\geq 1$ and $n\geq 1$, inequality (\ref{4}) holds immediately, which in turn yields (\ref{5}).

        Applying the ceiling inequality $\ceil{x}+\ceil{y}-1\leq \ceil{x+y}$ along with (\ref{5}), we obtain
		\begin{align*}
			\lambda_m+\lambda_n-1&= \ceil{\frac{5m+2-A(m)}{10}}+\ceil{\frac{5n+2-A(n)}{10}}-1\\
			&\leq \ceil{\frac{5m+5n+4-A(m)-A(n)}{10}}\\
			&\leq \ceil{\frac{5(m+n+2)+2-A(m+n+2)}{10}} \\
			&=\lambda_{m+n+2},
		\end{align*}
        which completes the proof of Claim~\ref{claim 2}.

The result now follows from Claims~\ref{claim 1} and \ref{claim 2}.
\end{proof}

\begin{lemma}\label{lem:path_mean_mode}
For every integer $r\geq 1$, one has
$\dfrac{I'(P_r;1)}{I(P_r;1)}\leq \lambda_r+\dfrac12$.
\end{lemma}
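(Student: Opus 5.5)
The plan is to compute $\mu_r:=I'(P_r;1)/I(P_r;1)$ in closed form and compare it with the real number
\[
x_r:=\frac{5r+2-\sqrt{5r^2+20r+24}}{10}.
\]
Since $\lambda_r=\ceil{x_r}\geq x_r$, it suffices to prove $\mu_r\le x_r+\tfrac12$ for all $r\ge1$.

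\textbf{Step 1 (recurrence and closed form).} Write $F_k$ for the Fibonacci numbers with $F_1=F_2=1$. Applying \Cref{FormulaforIndependence}(i) at an end vertex gives
\[
I(P_r;t)=I(P_{r-1};t)+tI(P_{r-2};t),
\]
so $I(P_r;1)=F_{r+2}$. Differentiating at $t=1$ and setting $D_r:=I'(P_r;1)$ gives
\[
D_r=D_{r-1}+D_{r-2}+F_r .
\]
I would solve this inhomogeneous Fibonacci recurrence with the ansatz $D_r=(a r+b)F_{r+1}+(c r+d)F_r$, matching the initial values $D_1=1$ and $D_2=2$. This should produce the classical convolution identity
\[
D_r=\sum_{j}F_jF_{r+1-j}=\frac{(r+1)L_{r+1}-F_{r+1}}{5}\quad\text{(up to index shifts)},
\]
where $L_k$ are the Lucas numbers. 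The exact form will be checked against $r=1,2,3$.

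\textbf{Step 2 (asymptotic comparison).} Substitute Binet's formulas with $\varphi=(1+\sqrt5)/2$. Then $\mu_r$ equals $\frac{5-\sqrt5}{10}\,r+\kappa$ plus an error term of size $O(r\varphi^{-2r})$ whose sign alternates with $r$; here $\kappa$ is an explicit constant. For $x_r$, use
\[
\sqrt{5r^2+20r+24}=\sqrt5\,(r+2)\sqrt{1+\frac{4}{5(r+2)^2}}\le \sqrt5\,(r+2)+\frac{2}{\sqrt5\,(r+2)} .
\]
This gives
\[
x_r\ge \frac{5-\sqrt5}{10}\,r+\frac{2-2\sqrt5}{10}-\frac{1}{5\sqrt5\,(r+2)} .
\]
The linear terms cancel. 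The claim then reduces to the inequality
\[
\kappa-\frac{2-2\sqrt5}{10}+\text{(error)}+\frac{1}{5\sqrt5(r+2)}\le \frac12 .
\]
I expect the constant gap $\kappa-\frac{2-2\sqrt5}{10}$ to be strictly below $\tfrac12$ with some margin. Then, for $r$ beyond a modest threshold $r_0$, the two decaying terms are too small to close that margin.

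\textbf{Step 3 (small cases).} For $r<r_0$ I would verify $\mu_r\le x_r+\tfrac12$, or directly $\mu_r\le\lambda_r+\tfrac12$, by tabulating $F_{r+2}$, $D_r$ and $\lambda_r$.

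\textbf{Main obstacle.} The main difficulty is Step 2. I must pin down $\kappa$ exactly and bound the alternating Binet error rigorously; an exact rational expression for $D_r/F_{r+2}$ is needed. If the margin $\tfrac12-(\kappa-\frac{2-2\sqrt5}{10})$ turns out to be small, I would instead work with exact integer inequalities. Concretely, I would clear denominators, square to remove $\sqrt{5r^2+20r+24}$, and use the identity $L_k^2-5F_k^2=4(-1)^k$ to handle parity-dependent terms, treating even and odd $r$ separately.
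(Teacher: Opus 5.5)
Your plan is correct and is essentially the paper's argument. Both use the same closed form, $I'(P_r;1)=\frac{rF_{r+2}+(r+2)F_r}{5}$, which equals your $\frac{(r+1)L_{r+1}-F_{r+1}}{5}$, followed by Binet's formula and a parity split for the alternating error; the paper just packages the comparison as $F_r/F_{r+2}\le \frac{3r+7-\sqrt{5r^2+20r+24}}{2(r+2)}$. For the record, $\kappa=\frac{3-\sqrt5}{5}$, so your constant gap is exactly $\frac25$ (margin $\frac1{10}$), and $r=1$ is an equality case ($\mu_1=\frac12$, $\lambda_1=0$) where the estimate from $\sqrt{1+u}\le 1+\frac u2$ falls just short, so $r=1$ must be among your directly checked cases, as the paper also does.
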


\begin{proof}
Let $F_0=0$, $F_1=1$, and $F_{r+2}=F_{r+1}+F_r$ be the Fibonacci
numbers. Since
\begin{equation*}
I(P_r;t)=\sum_{j\geq0}\binom{r-j+1}{j}t^j,
\end{equation*}
we have $I(P_r;1)=F_{r+2}$. Combinatorially, the quantity $I'(P_r;1)$ counts pairs $(S,v)$, where
$S$ is an independent set of $P_r$ and $v\in S$ is a distinguished vertex.

Write $V(P_r)=\{v_1,\ldots,v_r\}$, where $v_i$ is adjacent to 
$v_{i+1}$. Fix a distinguished vertex $v_j$. After choosing $v_j$,
the vertices that may still be selected lie to the left
of $v_{j-1}$ or to the right of $v_{j+1}$.
The numbers of possible independent sets on these two
sides are $F_j$ and $F_{r-j+1}$, respectively.
At an endpoint, the corresponding side is empty and
contributes one choice, consistently with $F_1=F_2=1$.
Thus the number of independent sets containing $v_j$
is $F_jF_{r-j+1}$. Summing over all possible choices of $v_j$, we obtain
\begin{equation*}
I'(P_r;1)
=
\sum_{j=1}^{r} F_jF_{r-j+1}.
\end{equation*}
Using the Fibonacci convolution identity, we obtain
\begin{equation*}
I'(P_r;1)=\frac{rF_{r+2}+(r+2)F_r}{5}.
\end{equation*}
Consequently,
\[
\frac{I'(P_r;1)}{I(P_r;1)}
=
\frac15\left(r+(r+2)\frac{F_r}{F_{r+2}}\right).
\]
To estimate the last quotient, let
$\varphi=(1+\sqrt5)/2$ and
$\alpha=\varphi^{-2}=(3-\sqrt5)/2$. By Binet's formula,
$F_r=(\varphi^r-\psi^r)/\sqrt5$, where
$\psi=(1-\sqrt5)/2$. Since $\psi/\varphi=-\alpha$, we obtain
\[
\frac{F_r}{F_{r+2}}
=
\alpha\,
\frac{1-(-\alpha)^r}{1-(-\alpha)^{r+2}}.
\]
Set
\[
B_r:=
\frac{3r+7-\sqrt{5r^2+20r+24}}{2(r+2)}.
\]
We show that $F_r/F_{r+2}\leq B_r$.

First, observe that
\[
B_r-\alpha
=
\frac{
1+\sqrt5(r+2)-\sqrt{5(r+2)^2+4}
}{
2(r+2)
}>0.
\]
Indeed, $\sqrt{5(r+2)^2+4}<1+\sqrt5(r+2)$ for every $r\geq1$.

Suppose first that $r$ is even. Then
\[
\frac{F_r}{F_{r+2}}
=
\alpha\frac{1-\alpha^r}{1-\alpha^{r+2}}
<\alpha<B_r.
\]

Now suppose that $r$ is odd. For $r=1$, one checks directly that
$F_1/F_3=B_1=1/2$. Assume that $r\geq3$. Then
\begin{align*}
\frac{F_r}{F_{r+2}}-\alpha
&=
\alpha
\left(
\frac{1+\alpha^r}{1+\alpha^{r+2}}-1
\right)\\
&=
\frac{\alpha^{r+1}(1-\alpha^2)}
     {1+\alpha^{r+2}}
<\alpha^{r+1}.
\end{align*}
Since $\alpha<2/5$, an elementary induction on odd $r\geq3$ gives
$\alpha^{r+1}\leq 1/\bigl(5(r+2)\bigr)$.

On the other hand, since
$\sqrt5(r+2)\geq3\sqrt5>91/30$, we have
\[
\sqrt{5(r+2)^2+4}
\leq
\sqrt5(r+2)+\frac35.
\]
Therefore,
\[
B_r-\alpha
\geq
\frac{1}{5(r+2)}.
\]
It follows that
\[
\frac{F_r}{F_{r+2}}-\alpha
<
\alpha^{r+1}
\leq
\frac{1}{5(r+2)}
\leq
B_r-\alpha.
\]
Hence $F_r/F_{r+2}\leq B_r$ also when $r$ is odd. We conclude that
\[
\frac{F_r}{F_{r+2}}
\leq
\frac{3r+7-\sqrt{5r^2+20r+24}}{2(r+2)}
\]
for every $r\geq1$.
It follows that
\[
\frac{I'(P_r;1)}{I(P_r;1)}
\leq
\frac{5r+7-\sqrt{5r^2+20r+24}}{10}.
\]
Recall that
$\lambda_r=\left\lceil
\bigl(5r+2-\sqrt{5r^2+20r+24}\bigr)/10
\right\rceil$. Therefore,
\[
\frac{I'(P_r;1)}{I(P_r;1)}
\leq
\frac{5r+2-\sqrt{5r^2+20r+24}}{10}
+\frac12
\leq
\lambda_r+\frac12.
\]
This completes the proof.
\end{proof}

\begin{theorem}\label{thm:mode_tadpole}
Let $\mu_{m,n}$ denote the mode of the independence polynomial of
$T_{m,n}$. Then
$\lambda_{m-3}+\lambda_n-1\leq\mu_{m,n}
\leq\lambda_{m-3}+\lambda_n+2$
for all $m\geq4$ and $n\geq1$.
\end{theorem}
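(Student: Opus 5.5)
Our approach is to decompose $I(T_{m,n};t)$ into products of path polynomials, each of which has only real zeros. Then we bound the mode through Darroch-type estimates (\Cref{thm_limit_mode}) and \Cref{lem_compare_mode}.

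\textbf{Step 1: the decomposition.} Apply \Cref{FormulaforIndependence}(i) at the cycle vertex $x_m$ of $T_{m,n}$. Deleting $x_m$ leaves the disjoint union $P_{m-1}\cup P_n$. Deleting $N[x_m]=\{x_{m-1},x_1,x_m,y_1\}$ leaves $P_{m-3}\cup P_{n-1}$, with $P_0$ empty. By \Cref{FormulaforIndependence}(ii),
\[
I(T_{m,n};t)=I(P_{m-1};t)I(P_n;t)+t\,I(P_{m-3};t)I(P_{n-1};t).
\]
Write $f$ and $g$ for the two summands. Paths are claw-free, so each factor has only real zeros, and hence so do $f$ and $g$. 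Both are therefore unimodal. By \Cref{cor:unimodality_Tadpoles} the sum $f+g$ is unimodal. \Cref{lem_compare_mode} then gives $\min(p,q)\le\mu_{m,n}\le\max(p,q)$, where $p$ and $q$ are the modes of $f$ and $g$.

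\textbf{Step 2: bounding the modes of $f$ and $g$.} The ratio $P'(1)/P(1)$ is additive over products, and it increases by $1$ under multiplication by $t$. Using \Cref{thm_limit_mode}:
\[
p\le\Bigl\lceil \tfrac{I'(P_{m-1};1)}{I(P_{m-1};1)}+\tfrac{I'(P_n;1)}{I(P_n;1)}\Bigr\rceil,\qquad
q\le\Bigl\lceil 1+\tfrac{I'(P_{m-3};1)}{I(P_{m-3};1)}+\tfrac{I'(P_{n-1};1)}{I(P_{n-1};1)}\Bigr\rceil.
\]
Here \Cref{lem:path_mean_mode} bounds each path mean by $\lambda_r+\tfrac12$.

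For the lower bounds we use the floor. The first path factor also has only real zeros, so the same theorem applied to it alone gives mean $\ge\lambda_r-1$. This will be sharpened where needed using $\lambda_r\le\lceil\text{mean}\rceil$.

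\textbf{Step 3: comparing the path modes with $\lambda_{m-3}$ and $\lambda_n$.} We need to relate $\lambda_{m-1}$ and $\lambda_{n-1}$ to $\lambda_{m-3}$ and $\lambda_n$. The path modes satisfy $\lambda_{r+1}-\lambda_r\in\{0,1\}$; this follows from \Cref{unimodal_tail} applied to paths, or from the explicit formula. Since $\lambda_{r+2}-\lambda_r\le1$ as well (the formula grows by about $(5-\sqrt5)/5\cdot 2/2<1$ per two steps), we get $\lambda_{m-1}\le\lambda_{m-3}+1$. With $\lambda_{n-1}\le\lambda_n$, this yields
\[
p\le\lambda_{m-1}+\lambda_n+1\le\lambda_{m-3}+\lambda_n+2,\qquad
q\le\lambda_{m-3}+\lambda_n+2.
\]
For the lower bound: $p\ge\lambda_{m-1}+\lambda_n-1\ge\lambda_{m-3}+\lambda_n-1$ and $q\ge 1+\lambda_{m-3}+\lambda_{n-1}-1\ge\lambda_{m-3}+\lambda_n-1$. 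Cases with an empty factor ($m=4$ or $n=1$) are checked directly, with the convention $\lambda_0=0$.

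\textbf{Main obstacle.} The delicate point is the lower bound for the sum of the two means. The ceiling and floor losses must not accumulate beyond one unit. Applying \Cref{thm_limit_mode} to the product rather than factorwise means we need a lower estimate of the form $\text{mean}(P_r)\ge\lambda_r-\tfrac12$. The first thing we would try is to prove this companion to \Cref{lem:path_mean_mode} by the same Binet argument: compare $F_r/F_{r+2}$ from below with $(3r-3-\sqrt{5r^2+20r+24})/(2(r+2))$. If that fails, we would fall back on \Cref{lem:path_mean_mode} together with the exact value of the mean, combined with \Cref{inequality_lambda}.
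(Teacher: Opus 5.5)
Your expansion at $x_m$ is correct, and so is your upper bound. Lemma~\ref{lem:path_mean_mode} gives $p\le\lambda_{m-1}+\lambda_n+1$ and $q\le\lambda_{m-3}+\lambda_{n-1}+2$. The inequalities $\lambda_{m-1}\le\lambda_{m-3}+1$ and $\lambda_{n-1}\le\lambda_n$ follow because $\gamma_r=(5r+2-\sqrt{5r^2+20r+24})/10$ has slope strictly between $0$ and $\tfrac12$.

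\textbf{The gap is the lower bound.} Write $\mathrm{av}(P_r)=I'(P_r;1)/I(P_r;1)$. Theorem~\ref{thm_limit_mode} only gives $\mathrm{av}(P_r)>\lambda_r-1$, so for a product of two path polynomials it certifies a mode $\ge\lambda_a+\lambda_b-2$, not $\lambda_a+\lambda_b-1$. Applied to your split, this yields only $p\ge\lambda_{m-1}+\lambda_n-2$ and $q\ge\lambda_{m-3}+\lambda_{n-1}-1$. With $s=\lambda_{m-3}+\lambda_n$, these reach $s-1$ only when $\lambda_{m-1}=\lambda_{m-3}+1$ and $\lambda_{n-1}=\lambda_n$. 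Already for $m=5$, where $\lambda_4=\lambda_2=1$, you get only $s-2$. So the Step~3 claims $p\ge\lambda_{m-1}+\lambda_n-1$ and $q\ge\lambda_{m-3}+\lambda_{n-1}$ are unsupported.

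The estimate $\mathrm{av}(P_r)\ge\lambda_r-\tfrac12$ that you hope to prove is false. The polynomial $I(P_{12};t)=1+12t+55t^2+120t^3+126t^4+56t^5+7t^6$ has $\lambda_{12}=4$, but $\mathrm{av}(P_{12})=1308/377<3.5$. The reason is that $\mathrm{av}(P_r)-\gamma_r$ stays near $0.4$, while $\lambda_r-\gamma_r$ can be close to $1$; at $r=218$ one gets $\mathrm{av}(P_r)-\lambda_r\approx-0.59$. Your proposed Binet comparison with $(3r-3-\sqrt{5r^2+20r+24})/(2(r+2))$ would only give $\mathrm{av}(P_r)\ge\gamma_r-\tfrac12$. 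That conflates $\gamma_r$ with $\lambda_r=\lceil\gamma_r\rceil$.

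\textbf{How the paper avoids this.} It expands at $x_{m-1}$ instead of $x_m$, giving $I(T_{m,n};t)=I(P_{m+n-1};t)+t\,I(P_{m-3};t)I(P_n;t)$. The first summand is a single path, with mode exactly $\lambda_{m+n-1}\in[s-1,s+1]$ by Lemma~\ref{inequality_lambda}. In the second summand, the factor $t$ pays for the two-unit loss from applying Darroch's bound to the product, so its mode $1+\mu$ satisfies $s-1\le1+\mu\le s+2$. Lemma~\ref{lem_compare_mode} then gives both bounds. In your split, $f$ has no shift at all, and the shift in $g$ is used up by replacing $P_n$ with $P_{n-1}$.

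\textbf{How your route could be repaired.} You would need a new lemma measured against $\gamma_r$ rather than $\lambda_r$, for instance $\mathrm{av}(P_r)\ge\gamma_r+0.38$ for $r\ge1$. Combined with $\gamma_{r+2}-\gamma_r>0.55$ and $\gamma_{r+1}-\gamma_r<\tfrac12$, it gives $\mathrm{av}(P_{m-1})+\mathrm{av}(P_n)>s-0.7$. For $n\ge2$ it also gives $\mathrm{av}(P_{m-3})+\mathrm{av}(P_{n-1})>s-1.75$, and $n=1$ is immediate. Hence $p,q\ge s-1$. No such lemma appears in your proposal, and the one you propose is false.
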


\begin{proof}
Using the labeling in Figure~\ref{fig1}, deleting $x_{m-1}$ leaves
a path on $m+n-1$ vertices, whereas deleting $N[x_{m-1}]$ leaves
the disjoint union $P_{m-3}\cup P_n$. Hence Proposition~\ref{FormulaforIndependence}
gives 
\begin{align*}
I(T_{m,n};t)
&=I(T_{m,n}\setminus x_{m-1};t)
+tI(T_{m,n}\setminus N[x_{m-1}];t)\\
&=I(P_{m+n-1};t)
+tI(P_{m-3};t)I(P_n;t).
\end{align*}

Set $f(t):=I(P_{m-3};t)I(P_n;t)$ and
$s:=\lambda_{m-3}+\lambda_n$.
Since $P_{m-3}\cup P_n$ is claw-free, the polynomial $f(t)$ has only
real negative roots. In particular, $f(t)$ is log-concave and hence
unimodal. By Theorem \ref{thm_limit_mode}, if $\mu$ denotes the mode of $f$, then
    \begin{align*}
        \flo{\frac{f'(1)}{f(1)}}\leq \mu\leq \ceil{\frac{f'(1)}{f(1)}}.
    \end{align*}
On the other hand,
\[
\frac{f'(1)}{f(1)}
=
\frac{I'(P_{m-3};1)}{I(P_{m-3};1)}
+
\frac{I'(P_n;1)}{I(P_n;1)}.
\]
Applying Lemma~\ref{lem:path_mean_mode} to $P_{m-3}$ and $P_n$, we
obtain
\[
\frac{f'(1)}{f(1)}
\leq
\lambda_{m-3}+\frac12+\lambda_n+\frac12
=s+1.
\]
Hence $\mu\leq
\left\lceil f'(1)/f(1)\right\rceil\leq s+1$. Therefore, the mode of
$tf(t)$ is at most $s+2$.

Moreover, Lemma~\ref{inequality_lambda} gives
$|\lambda_{m+n-1}-\lambda_{m-3}-\lambda_n|\leq1$, and hence
$\lambda_{m+n-1}\leq s+1$. Since $I(T_{m,n};t)$ is unimodal, Lemma~\ref{lem_compare_mode} yields
\[
\mu_{m,n}
\leq
\max\{\lambda_{m+n-1},1+\mu\}
\leq s+2.
\]
Applying Theorem~\ref{thm_limit_mode} to each path
independence polynomial, and using the elementary
floor and ceiling inequalities, we obtain
\begin{align*}
        \flo{\frac{f'(1)}{f(1)}}&\geq \flo{\frac{I'(P_{m-3};1)}{I(P_{m-3};1)}}+\flo{\frac{I'(P_n;1)}{I(P_n;1)}}\\
        &\geq \ceil{\frac{I'(P_{m-3};1)}{I(P_{m-3};1)}}+ \ceil{\frac{I'(P_n;1)}{I(P_n;1)}}-2\\
        &\geq \lambda_{m-3}+\lambda_n-2.
    \end{align*}
    Consequently, the mode of $tf(t)$ is $\mu+1$, which satisfies
    \begin{align*}
        \lambda_{m-3}+\lambda_n-1\leq \mu+1\leq \lambda_{m-3}+\lambda_n+2.
    \end{align*}
Applying Lemmas~\ref{lem_compare_mode} and~\ref{inequality_lambda}, we obtain
$\abs{\lambda_{m+n-1}-\lambda_{m-3}-\lambda_n}\leq 1$ and
\[
\min\brb{\lambda_{m+n-1},\mu+1}
\leq \mu_{m,n}
\leq
\max\brb{\lambda_{m+n-1},\mu+1}.
\]
Combining these inequalities with the bounds for $\mu+1$ obtained above, we conclude that
$\lambda_{m-3}+\lambda_n-1\leq\mu_{m,n}
\leq\lambda_{m-3}+\lambda_n+2$.
\end{proof}
\begin{remark}\label{rem:sharp-mode-bounds}
Both bounds in Theorem~\ref{thm:mode_tadpole} are sharp. The upper bound is attained by $T_{4,19}$. Indeed,
$\lambda_1=0$ and $\lambda_{19}=5$, while the maximum coefficient of
$I(T_{4,19};t)$ occurs in degree $7$. Hence,
$\mu_{4,19}=7=\lambda_1+\lambda_{19}+2$.

For the lower bound, consider $T_{221,218}$. We have
$\lambda_{218}=61$. If $h_k$ is the coefficient of $t^k$ in $I(T_{221,218};t)$, then a direct computation using
\[
h_k=\binom{439-k}{k}
+\sum_{i+j=k-1}
\binom{219-i}{i}\binom{219-j}{j}
\]
gives $h_{121}>h_{122}$. Since $I(T_{221,218};t)$ is unimodal by Corollary~\ref{cor:unimodality_Tadpoles}, and $\mu_{221,218}\ge 2\lambda_{218}-1$, it
follows that
$\mu_{221,218}=121=2\lambda_{218}-1$. Thus, the lower bound is also
attained.
\end{remark}

\section{The WLP for tadpole graphs}
Throughout this section, $\Bbbk$ denotes a field of characteristic
zero. We use the labeling of $T_{m,n}$ shown in
Figure~\ref{fig1}. 
Let $A(T_{m,n})$
be the Artinian algebra associated to the tadpole graph.
In this section, we prove Theorem~\ref{thm:mainTheorem_WLP}. We begin by
recalling the known classifications for the cases $m=3$ and $n=1$.
The algebras $A(T_{3,n})$ and $A(T_{m,1})$ were studied by Nguyen and
the first author in~\cite{NT24b}. Their results are summarized as follows.

\begin{proposition}[{\cite[Corollary~4.5 and Theorem~4.7]{NT24b}}]
\label{pro:WLP_small_m_and_n}
The following statements hold:
\begin{itemize}
\item[(a)] $A(T_{3,n})$ has the WLP if and only if $n\in\{1,3,4,7\}$.
\item[(b)] $A(T_{m,1})$ has the WLP if and only if
$m\in\{3,4,\ldots,10,12,13,16\}$.
\end{itemize}
\end{proposition}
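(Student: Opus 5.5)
Since this proposition is quoted from \cite[Corollary~4.5 and Theorem~4.7]{NT24b}, within this paper it needs no argument beyond the citation. If I had to reprove it, I would run the same scheme twice: once for the family $T_{3,n}$ (vary $n$) and once for the family $T_{m,1}$ (vary $m$). By Proposition~\ref{Proposition2.5} it suffices to test $\ell=\sum x_i+\sum y_j$. In each family I would first pin down the Hilbert series. Proposition~\ref{FormulaforIndependence}, applied as in the proof of Theorem~\ref{thm:mode_tadpole}, gives
\[
I(T_{3,n};t)=I(P_{n+2};t)+tI(P_n;t),
\]
and $I(T_{m,1};t)=I(P_m;t)+tI(P_{m-3};t)I(P_1;t)$. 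Corollary~\ref{cor:unimodality_Tadpoles} gives unimodality, and the mode estimates locate the unique degree $d_0$ where $\times\ell$ must switch from injective to surjective.

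\textbf{Large parameters.} For the failure of the WLP I would use two obstructions.

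\emph{Surjectivity.} Setting the appropriate variables to zero gives a surjection $A(T)\to A(P_k)$, for instance killing the cycle variables gives $A(P_n)$. If $\times\ell$ is surjective in some degree on $A(T)$, it stays surjective on any such quotient, because $\ell$ maps to the sum of the surviving variables. So Theorem~\ref{WLP for P_n}, which says $\times\ell:[A(P_k)]_{\lambda_k}\to[A(P_k)]_{\lambda_k+1}$ is not surjective for $k\ge17$, produces a non-surjective map in degree $\lambda_k$ on $A(T)$. This is only a genuine obstruction when $\lambda_k\ge d_0$, i.e. the degree lies on the side where surjectivity is required. The available quotient paths are shorter than $T$, so their modes sit below $d_0$. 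This mismatch is the delicate point.

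\emph{Injectivity.} On the injective side I would instead build explicit nonzero kernel elements in a degree below $d_0$. A natural building block is a symmetric combination supported on the cycle, for example something involving $x_1-x_2$ in the triangle. I would multiply it by a monomial or form on the tail that kills the cross terms, then check $\ell\cdot F=0$ term by term using $x_i^2=0$ and the edge relations.

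\textbf{Small parameters and main obstacle.} For the finitely many small cases left after the bounds, I would verify maximal rank of every $\times\ell$ by direct computation, e.g.\ in \texttt{Macaulay2}. This yields exactly the listed exceptional sets $\{1,3,4,7\}$ and $\{3,\dots,10,12,13,16\}$. I expect the main obstacle to be making the obstructions hit exactly the degree $d_0$, that is, matching $\lambda_k$ of the quotient path (or the degree of the kernel element) with the mode of $I(T;t)$. This is where sharp mode information such as Lemma~\ref{inequality_lambda} and Theorem~\ref{thm_limit_mode} would be needed, together with a careful coefficient comparison in the borderline cases.
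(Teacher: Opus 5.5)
Your proposal is correct and matches the paper: the proposition is simply quoted from \cite{NT24b}, and neither the paper nor you needs any further argument here. Your optional reproof sketch follows the same strategy the paper uses in Section~4 (a surjectivity failure inherited from $A(P_{N-1})$, an explicit kernel element, and a machine check of the small cases). The ``degree mismatch'' you leave open is closed there by a dichotomy. If the mode $\mu$ of the tadpole polynomial satisfies $\mu\le\lambda_{N-1}$, the path quotient obstructs surjectivity. Otherwise $\lceil N/4\rceil\le\lambda_{N-1}<\mu$ by Lemma~\ref{lower bound for lambda}, so the kernel element of Lemma~\ref{lem:block-annihilator} lies on the side where injectivity is required.
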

In view of Proposition~\ref{pro:WLP_small_m_and_n}, it remains to
consider the range $m\geq4$ and $n\geq2$. We first combine a path
quotient with an explicit kernel element to show that
$A(T_{m,n})$ fails the WLP whenever $m+n\geq18$.
The case $m+n=16$ is then excluded by comparing two consecutive
coefficients of the Hilbert series and applying the same kernel
construction. Finally, the remaining cases, with $m+n\leq15$
or $m+n=17$, are settled by exact computations over $\mathbb{Q}$
in \texttt{Macaulay2}.

We begin with a lemma that transfers failures of surjectivity
from graded quotients to the original algebra.

\begin{lemma}\label{lem:failure_transfer}
Let $A=R/I$ be an Artinian monomial algebra over a field $\Bbbk$ of
characteristic zero. Assume that the Hilbert function of $A$ is
unimodal with mode $\mu$, and let $\ell_A$ denote the sum of the images
in $A$ of the variables of $R$.

Let $\pi:A\twoheadrightarrow B$ be a graded surjection, and set
$\ell_B=\pi(\ell_A)$. If multiplication by $\ell_B$ is not surjective from $[B]_d$ to
$[B]_{d+1}$ for some $d\geq\mu$, then multiplication by $\ell_A$ from
$[A]_d$ to $[A]_{d+1}$ is not surjective. In particular, $A$ does not
have the WLP.
\end{lemma}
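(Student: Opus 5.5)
Since $\pi$ is a graded surjection with $\pi(\ell_A)=\ell_B$, the argument is a commutative-square argument. For each $d$ we have the square
\[
\pi_{d+1}\circ(\times\ell_A)=(\times\ell_B)\circ\pi_d:[A]_d\longrightarrow[B]_{d+1},
\]
which holds because $\pi$ is a ring homomorphism. We argue by contraposition. Suppose $\times\ell_A:[A]_d\to[A]_{d+1}$ is surjective. Then for any $b\in[B]_{d+1}$ choose $a\in[A]_{d+1}$ with $\pi(a)=b$, using that $\pi_{d+1}$ is surjective. Next write $a=\ell_A a'$ with $a'\in[A]_d$. This gives $b=\pi(\ell_Aa')=\ell_B\,\pi(a')$, so $\times\ell_B:[B]_d\to[B]_{d+1}$ is surjective. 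This contradicts the hypothesis, and therefore $\times\ell_A$ fails to be surjective from degree $d$ to $d+1$.

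For the ``in particular'' we use the WLP through Proposition~\ref{Proposition2.5}. Since $A$ is an Artinian monomial algebra, $A$ has the WLP if and only if $\ell_A$ is a Lefschetz element. So it suffices to show that $\times\ell_A:[A]_d\to[A]_{d+1}$ has neither injectivity nor surjectivity. Surjectivity has just been ruled out. For injectivity we use unimodality of the Hilbert function with mode $\mu$ and $d\ge\mu$, which gives $h_d\ge h_{d+1}$.
\begin{itemize}
\item If $h_d>h_{d+1}$, then the map cannot be injective for dimension reasons.
\item If $h_d=h_{d+1}$, then injectivity is equivalent to surjectivity for a linear map between spaces of equal finite dimension, so the map is not injective either.
\end{itemize}
Hence the map does not have maximal rank, and $A$ fails the WLP.

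One edge case needs a check: $[A]_{d+1}$ could be $0$, i.e.\ $d+1>D$. Then surjectivity is automatic. But $[B]_{d+1}$ is a quotient of $[A]_{d+1}=0$, so $B$ would also be surjective in that degree, contrary to the hypothesis. So this case cannot occur, and the argument above already covers it. The step that needs the most care is the injectivity claim, which is exactly where $d\ge\mu$ is used. There is no serious obstacle; characteristic zero is needed only to invoke Proposition~\ref{Proposition2.5}.
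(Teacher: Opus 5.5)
Your proposal is correct and is essentially the paper's proof: the commutative square shows that surjectivity of $\times\ell_A:[A]_d\to[A]_{d+1}$ would force surjectivity of $\times\ell_B:[B]_d\to[B]_{d+1}$. Since $d\geq\mu$ gives $\dim_{\Bbbk}[A]_d\geq\dim_{\Bbbk}[A]_{d+1}$, maximal rank in that degree would require surjectivity, so $\ell_A$ is not a Lefschetz element and Proposition~\ref{Proposition2.5} finishes. Your case split ($h_d>h_{d+1}$ versus $h_d=h_{d+1}$) and the check that $[A]_{d+1}=0$ cannot occur only unpack that step in more detail. Incidentally, Proposition~\ref{Proposition2.5} holds over any field, so your remark that characteristic zero is needed to invoke it is not accurate, though this does not affect the argument.
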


\begin{proof}
Since $I$ is monomial, Proposition~\ref{Proposition2.5} shows that $A$
has the WLP if and only if $\ell_A$ is a Lefschetz element of $A$.
Consider the commutative diagram
\begin{equation*}
\begin{CD}
[A]_d @>{\times\ell_A}>> [A]_{d+1}\\
@V{\pi}VV @VV{\pi}V\\
[B]_d @>{\times\ell_B}>> [B]_{d+1}.
\end{CD}
\end{equation*}
If the upper horizontal map were surjective, then the lower horizontal
map would also be surjective, contrary to the assumption. Since
$d\geq\mu$ and the Hilbert function of $A$ is unimodal, we have
$\dim_{\Bbbk}[A]_d\geq\dim_{\Bbbk}[A]_{d+1}$. Thus, maximal rank in
this degree requires surjectivity. Therefore, $\ell_A$ is not a
Lefschetz element of $A$, and hence $A$ does not have the WLP.
\end{proof}

The preceding lemma detects failures of the WLP inherited from graded
quotients. We shall also need a direct obstruction to injectivity in
$A(T_{m,n})$. By Proposition~\ref{Proposition2.5}, it suffices to
consider the linear form
$\ell=x_1+\cdots+x_m+y_1+\cdots+y_n$.
The following lemma constructs a nonzero element of $A(T_{m,n})$
annihilated by $\ell$ in degree $\lceil(m+n)/4\rceil$.

\begin{lemma}\label{lem:block-annihilator}
Set $N=m+n$ and $d=\lceil N/4\rceil$. Then the multiplication map
\begin{equation*}
\times\ell:[A(T_{m,n})]_d\longrightarrow[A(T_{m,n})]_{d+1}
\end{equation*}
is not injective. Consequently, if
$\dim_{\Bbbk}[A(T_{m,n})]_d\leq
\dim_{\Bbbk}[A(T_{m,n})]_{d+1}$, then $A(T_{m,n})$ does not have the
WLP.
\end{lemma}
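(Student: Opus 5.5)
The idea is to build a kernel element as a product of $d$ degree-one elements, one for each of $d$ pairwise disjoint edges. For an edge $\{a,b\}$ of any graph $G$ we have, already in $A(G)$,
\[
(x_a-x_b)(x_a+x_b)=x_a^2-x_b^2=0 .
\]
So we choose pairwise vertex-disjoint edges $e_i=\{a_i,b_i\}$, $i=1,\dots,d$, of $T_{m,n}$ and set
\[
u_i=x_{a_i}-x_{b_i},\qquad \ell_i=x_{a_i}+x_{b_i},\qquad u=\prod_{i=1}^d u_i\in[A(T_{m,n})]_d .
\]
We write $\ell=\sum_i\ell_i+\ell'$, where $\ell'$ is the sum of the variables not covered by the edges $e_i$.

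Then $\ell_i u_i=0$ gives $\ell u=\ell' u$. To dispose of $\ell'$, we choose the edges so that they cover all but at most a few vertices, and so that every uncovered vertex $w$ has the following property: for some $i$, $w$ is adjacent to both endpoints of $e_i$. In that case $x_w u_i=0$, and hence $x_w u=0$. Since $T_{m,n}$ is triangle-free for $m\ge 4$, a vertex cannot be adjacent to both endpoints of an edge, so this condition cannot be met. We therefore reorganize: we cover all vertices, taking the $d$ edges $e_i$ together with at most one further block.

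The block we use is a path on three vertices $p\!-\!q\!-\!r$. On it the degree-one element $w=x_p-x_q+x_r$ satisfies
\[
w(x_p+x_q+x_r)=x_p^2-x_q^2+x_r^2+2x_px_r-2x_qx_p+\dots ,
\]
and a direct computation shows this is not zero, because $2x_px_r$ survives. So we instead absorb such a block into a single edge factor, placing an uncovered vertex $w$ next to an endpoint, and require $x_w u_i=0$ only modulo the relations. Concretely, take $u_i=x_{a_i}-x_{b_i}$ where $w$ is adjacent to $b_i$ only. Then $x_wu_i=x_wx_{a_i}$, which we cancel by a compensating choice on the next block. This bookkeeping is the main obstacle.

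The cleaner route I would try first is to pick $d$ disjoint edges spaced along a Hamiltonian path $v_1,\dots,v_N$ of $T_{m,n}$ (namely $x_1,\dots,x_m,y_1,\dots,y_n$) with gaps of exactly two vertices, so $e_i=\{v_{4i-3},v_{4i-2}\}$, and each uncovered pair $v_{4i-1},v_{4i}$ lies between $e_i$ and $e_{i+1}$. We then replace the factor for the uncovered pair together with $e_i$ by a degree-one element supported on the path $P_4$ given by $v_{4i-3},\dots,v_{4i}$ and annihilated by $\ell$ restricted to that $P_4$. On $P_4$ the Hilbert function is $1,4,3$, so $\times\ell_{P_4}:[A(P_4)]_1\to[A(P_4)]_2$ has a kernel of dimension at least $1$, and such an element $u_i$ exists. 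The product $u=\prod u_i$ over the $d=\lceil N/4\rceil$ blocks, with the last block a shorter path $P_1,P_2,P_3$ handled the same way, is killed by $\ell$, since $\ell u=\sum_i(\ell_{B_i}u_i)\prod_{j\ne i}u_j$ and each $\ell_{B_i}u_i$ already lies in the ideal generated by the block's relations. (For a block $P_1$ there is no kernel element, so we would merge it into the preceding block as a $P_5$ with a degree-one kernel vector; this uses $\dim[A(P_5)]_1=5$ and $\dim[A(P_5)]_2=6$, which would fail, so the partition must be chosen with blocks of size $2,3,4$ only. This is possible for all $N\ge2$ while keeping exactly $d$ blocks.)

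The remaining step is nonvanishing of $u$. Expanding, $u$ is a sum of squarefree monomials, one variable from each block, with nonzero coefficients taken from the kernel vectors. Distinct choices give distinct monomials, so $u\ne0$ as soon as one choice of support vertices is an independent set of $T_{m,n}$. Here inter-block edges (consecutive blocks along the path) and the chord $x_1x_m$ matter. I would make the kernel vectors have full support on each block; for $P_2$ this is $x_a-x_b$, and for $P_4$ one checks the kernel vector involves an endpoint. Then we select an end vertex of each block avoiding the neighbours of the previous selection, choosing the path's starting point on the cycle so that the chord does not join two selected vertices (a small case check on $m \bmod 4$). The final sentence of the lemma then follows: if $\dim[A]_d\le\dim[A]_{d+1}$, maximal rank means injectivity, and with Proposition~\ref{Proposition2.5} this shows $A(T_{m,n})$ fails the WLP.
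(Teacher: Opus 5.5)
Your final route is essentially the paper's proof: the paper cuts the spanning path $z_1,\dots,z_N$ ($z_i=x_i$, $z_{m+j}=y_j$) into blocks of four and uses on each block exactly your $P_4$ kernel vector $z_{4j+1}+z_{4j+2}-z_{4j+3}-z_{4j+4}$; it proves nonvanishing via the monomial $\prod_j z_{4j+2}$ (times $z_{4q+1}$ when $r>0$), which never involves $z_1$, so the chord needs no case check on $m \bmod 4$ (the Hamiltonian path is forced up to reflection, so there is no starting point to choose, but taking the second vertex of each of your blocks works just as well). One correction: a $P_1$ block does have a kernel element, since $x\cdot x=x^2=0$, and the paper uses exactly $z_{4q+1}$ for $r=1$ (and $z_{4q+1}-z_{4q+2}$, $z_{4q+1}-z_{4q+3}$ for $r=2,3$), although your repartition into blocks of sizes $2,3,4$ is also valid.
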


\begin{proof}
Write $N=4q+r$, where $0\leq r\leq3$. Order the vertices by setting
$z_i=x_i$ for $1\leq i\leq m$ and $z_{m+j}=y_j$ for
$1\leq j\leq n$. Then $z_1,\ldots,z_N$ form a spanning path of
$T_{m,n}$, and the only edge of $T_{m,n}$ not belonging to this path
is $\{z_1,z_m\}$.

For each $0\leq j\leq q-1$, set
$a_j=z_{4j+1}+z_{4j+2}$,
$b_j=z_{4j+3}+z_{4j+4}$, and $F_j=a_j-b_j$.
Since consecutive vertices along the spanning path are adjacent and
the square of every variable is zero in $A(T_{m,n})$, we have
$a_j^2=b_j^2=0$. Hence,
\begin{equation*}
(a_j+b_j)F_j=(a_j+b_j)(a_j-b_j)=a_j^2-b_j^2=0.
\end{equation*}

If $r=0$, define
$F=\prod_{j=0}^{q-1}F_j$.

Suppose that $r>0$, and set
$H=z_{4q+1}+\cdots+z_{4q+r}$. Define
\begin{equation*}
G=
\begin{cases}
z_{4q+1},&r=1,\\
z_{4q+1}-z_{4q+2},&r=2,\\
z_{4q+1}-z_{4q+3},&r=3.
\end{cases}
\end{equation*}
Then $HG=0$. Indeed, this follows from the square-zero relations when
$r=1,2$. When $r=3$, we have
\begin{equation*}
HG
=z_{4q+2}z_{4q+1}-z_{4q+2}z_{4q+3}
=0,
\end{equation*}
because $z_{4q+2}$ is adjacent to both $z_{4q+1}$ and $z_{4q+3}$.

For every $r>0$, define
$F=\bigl(\prod_{j=0}^{q-1}F_j\bigr)G$.
The linear form $\ell$ can be written as
$$\ell=\sum_{j=0}^{q-1}(a_j+b_j)$$ when $r=0$, and as
$$\ell=\sum_{j=0}^{q-1}(a_j+b_j)+H$$ when $r>0$. Since
$(a_j+b_j)F_j=0$ for every $j$ and $HG=0$ when $r>0$, it follows that
$\ell F=0$. Moreover, $\deg F=q$ when $r=0$ and $\deg F=q+1$ when
$r>0$. Therefore, $F\in[A(T_{m,n})]_d$.

It remains to show that $F\neq0$. Set
$M_0=\prod_{j=0}^{q-1}z_{4j+2}$ and define
\begin{equation*}
M=
\begin{cases}
M_0,&r=0,\\
M_0z_{4q+1},&r>0.
\end{cases}
\end{equation*}
The monomial $M$ occurs in $F$ with coefficient $1$: it is obtained
uniquely by selecting $z_{4j+2}$ from each factor $F_j$ and, when
$r>0$, selecting $z_{4q+1}$ from $G$.

The indices of the variables occurring in $M_0$ differ successively
by $4$. When $r>0$, the last two selected indices are $4q-2$ and
$4q+1$, which differ by $3$. Thus, no two variables in $M$ correspond
to adjacent vertices of the spanning path. Furthermore, $M$ does not
contain $z_1$, so its support does not contain the additional edge
$\{z_1,z_m\}$. Hence, the support of $M$ is an independent set of
$T_{m,n}$. Therefore, $M$ is nonzero in $A(T_{m,n})$, and consequently
$F\neq0$.

Thus, $F$ is a nonzero element of $[A(T_{m,n})]_d$ satisfying
$\ell F=0$. Hence, multiplication by $\ell$ from degree $d$ to degree
$d+1$ is not injective.

Finally, if
$\dim_{\Bbbk}[A(T_{m,n})]_d\leq
\dim_{\Bbbk}[A(T_{m,n})]_{d+1}$, then maximal rank in this degree
requires injectivity. Therefore, $A(T_{m,n})$ does not have the WLP.
\end{proof}
\begin{lemma}\label{lower bound for lambda}
	For every integer $N\geq18$, one has
	\[
	\left\lceil \frac{N}{4}\right\rceil\leq\lambda_{N-1}.
	\]
\end{lemma}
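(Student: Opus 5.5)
The plan is to compare the two sides through the explicit formula for $\lambda_{N-1}$ recalled before Lemma~\ref{inequality_lambda}, namely
\[
\lambda_{r}=\left\lceil g(r)\right\rceil,\qquad g(r):=\frac{5r+2-\sqrt{5r^2+20r+24}}{10},
\]
with $r=N-1$. Since $\lambda_{N-1}$ is an integer equal to $\lceil g(N-1)\rceil$, the inequality $\lceil N/4\rceil\leq\lambda_{N-1}$ follows once we show the strict inequality $g(N-1)>\lceil N/4\rceil-1$. Indeed, then $\lambda_{N-1}\geq g(N-1)>\lceil N/4\rceil-1$, and integrality gives $\lambda_{N-1}\geq\lceil N/4\rceil$.

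Next I would remove the ceiling on the left by the crude bound $\lceil N/4\rceil\leq (N+3)/4$. This gives $\lceil N/4\rceil-1\leq (N-1)/4=r/4$, so it suffices to prove
\[
g(r)>\frac r4,
\qquad\text{that is,}\qquad
\frac{5r}{2}+2>\sqrt{5r^2+20r+24}.
\]
Both sides are positive, so I would square. This reduces to $\tfrac{25}{4}r^2+10r+4>5r^2+20r+24$, equivalently $r^2-8r-16>0$. That holds exactly when $r>4+4\sqrt2\approx 9.66$.

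Hence the inequality holds for every $r\geq10$, that is, for $N\geq11$, and in particular for all $N\geq18$. There is no real obstacle. The only points needing care are the direction of the ceiling manipulations (using strictness so that integrality closes the gap) and checking positivity before squaring. If one wanted a margin check, a sample value such as $N=18$ could be verified directly: $\lceil 18/4\rceil=5$ and $\lambda_{17}=5$.
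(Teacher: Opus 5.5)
Your proof is correct. Its skeleton matches the paper's: plug in the explicit formula for $\lambda_{N-1}$, then square once. The difference is in how you handle the ceiling.

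The paper proves the non-strict comparison $N/4\leq g(N-1)$ and then takes ceilings. After squaring, that comparison becomes $5N(N-20)\geq 0$, so it only covers $N\geq 20$. The cases $N=18,19$ are settled separately by computing $\lambda_{17}=\lambda_{18}=5$.

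You instead use the slack $\lceil N/4\rceil-1\leq (N-1)/4$ together with integrality of $\lambda_{N-1}$. That means you only need the strict inequality $g(r)>r/4$ with $r=N-1$, which reduces to $r^2-8r-16>0$ and holds for all $r\geq 10$.

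Your route is uniform, with no base cases to check numerically. It also proves more: the lemma holds for every $N\geq 11$, not just $N\geq 18$. The paper needs only $N\geq 18$ anyway, because the path result it relies on requires $N-1\geq 17$. Your direct check at $N=18$ is a harmless sanity check rather than a needed step.
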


\begin{proof}
	For $N=18,19$, the assertion follows from $\lambda_{17}=\lambda_{18}=5.$
	
	Suppose that $N\geq20$. By the formula for the mode of a path,
	\[
	\lambda_{N-1}
	=
	\left\lceil
	\frac{5N-3-\sqrt{5N^2+10N+9}}{10}
	\right\rceil.
	\]
	We claim that
	\[
	\frac{N}{4}
	\leq
	\frac{5N-3-\sqrt{5N^2+10N+9}}{10}.
	\]
	Indeed, this inequality is equivalent to
	\[
	5N-6\geq2\sqrt{5N^2+10N+9}.
	\]
	Since both sides are nonnegative, squaring shows that it is
	equivalent to
	\[
	5N(N-20)\geq0,
	\]
	which holds for $N\geq20$. Taking ceilings gives the result.
\end{proof}

\begin{proposition}\label{pro:large-total-size}
	Assume that $\operatorname{char}(\Bbbk)=0$.
	Let $m\geq4$ and $n\geq2$. If $m+n\geq18$, then
	$A(T_{m,n})$ does not have the weak Lefschetz property.
\end{proposition}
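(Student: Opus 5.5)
Set $N=m+n\geq18$, $d=\lceil N/4\rceil$, and let $\mu=\mu_{m,n}$ be the mode of $I(T_{m,n};t)$, which is unimodal by Corollary~\ref{cor:unimodality_Tadpoles}. The plan is a dichotomy on the position of $d$ relative to $\mu$. If $\dim[A(T_{m,n})]_d\leq\dim[A(T_{m,n})]_{d+1}$, then Lemma~\ref{lem:block-annihilator} already shows that $A(T_{m,n})$ fails the WLP. So it remains to treat the case $h_d>h_{d+1}$, where $h_k$ denotes the coefficient of $t^k$ in $I(T_{m,n};t)$. By unimodality this forces $\mu\leq d$, and Lemma~\ref{lower bound for lambda} then gives $\mu\leq d\leq\lambda_{N-1}$.

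In this second case I would use the path quotient. Deleting the bridge-adjacent chord, that is, imposing $x_1=0$, does not directly produce a path quotient. Instead, use the graded surjection
\[
\pi:A(T_{m,n})\twoheadrightarrow A(T_{m,n})/(x_1)\cong A(P_{N-1}).
\]
Killing the vertex $x_1$ leaves the path $x_2,\ldots,x_m,y_1,\ldots,y_n$ on $N-1$ vertices, and $\pi(\ell)$ is exactly the sum of the variables of $A(P_{N-1})$. Since $N-1\geq17$, Theorem~\ref{WLP for P_n} shows that multiplication by this sum is not surjective from degree $\lambda_{N-1}$ to degree $\lambda_{N-1}+1$. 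Because $\lambda_{N-1}\geq\mu$, Lemma~\ref{lem:failure_transfer} with $d'=\lambda_{N-1}$ shows that $\times\ell:[A(T_{m,n})]_{\lambda_{N-1}}\to[A(T_{m,n})]_{\lambda_{N-1}+1}$ is not surjective, so $A(T_{m,n})$ fails the WLP.

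In fact the dichotomy is unnecessary. The path-quotient argument needs only $\mu\leq\lambda_{N-1}$, while the annihilator argument needs only $\mu\geq d+1$, which is implied by $h_d\leq h_{d+1}$. The two ranges $\mu\leq\lambda_{N-1}$ and $\mu\geq d+1$ overlap, so together they cover every value of $\mu$. Concretely: if $\mu\leq\lambda_{N-1}$, use the quotient; otherwise $\mu\geq\lambda_{N-1}+1\geq d+1$, so $h_d\leq h_{d+1}$ by unimodality and Lemma~\ref{lem:block-annihilator} applies. I would present the proof in this cleaner form.

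The main point to verify carefully is the identification $A(T_{m,n})/(x_1)\cong A(P_{N-1})$ together with the image of $\ell$. One also needs the unimodality hypothesis of Lemma~\ref{lem:failure_transfer}; this is supplied by Corollary~\ref{cor:unimodality_Tadpoles}, since the Hilbert series equals the independence polynomial. The inequality $d\leq\lambda_{N-1}$ from Lemma~\ref{lower bound for lambda} is exactly what makes the two cases meet, and that is where the hypothesis $N\geq18$ is used. The hypotheses $m\geq4$ and $n\geq2$ serve only to stay in the range not already covered by Proposition~\ref{pro:WLP_small_m_and_n}; the argument itself does not use them beyond $N\geq18$.
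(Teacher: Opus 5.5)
Your proof is correct and is essentially the paper's argument: the same split on whether $\mu\le\lambda_{N-1}$, using the path quotient with Lemma~\ref{lem:failure_transfer} in one case and Lemma~\ref{lem:block-annihilator} (via $d\le\lambda_{N-1}<\mu$) in the other. The only difference is that you kill $x_1$ rather than $x_{m-1}$, which is equally valid because both are cycle neighbours of $x_m$ and either deletion leaves a path on $N-1$ vertices; your remark that imposing $x_1=0$ ``does not directly produce a path quotient'' is therefore a slip and should be deleted.
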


\begin{proof}
	Set
	\[
	A=A(T_{m,n}),\qquad
	N=m+n,\qquad
	d=\left\lceil\frac{N}{4}\right\rceil,
	\qquad
	p=\lambda_{N-1},
	\]
	and let $\ell$ be the sum of all the variables.
	By Corollary~\ref{cor:unimodality_Tadpoles}, the Hilbert
	function of $A$ is unimodal. Denote its mode by $\mu$.
	Lemma~\ref{lower bound for lambda} gives $d\leq p$.
	
	Suppose first that $\mu\leq p$. Deleting $x_{m-1}$ from
	$T_{m,n}$ leaves a path on $N-1$ vertices. Hence there is
	a graded surjection
	\[
	A\twoheadrightarrow A/(x_{m-1})\cong A(P_{N-1}).
	\]
	Since $N-1\geq17$, Theorem~\ref{WLP for P_n} shows that
	multiplication by the image of $\ell$ on $A(P_{N-1})$
	is not surjective from degree $p$ to degree $p+1$.
	As $p\geq\mu$, Lemma~\ref{lem:failure_transfer} implies
	that $A$ does not have the WLP.
	
	Suppose next that $\mu>p$. Then $d\leq p<\mu.$
	The unimodality of the Hilbert function yields
	\[
	\dim_{\Bbbk}[A]_d\leq\dim_{\Bbbk}[A]_{d+1}.
	\]
	On the other hand, Lemma~\ref{lem:block-annihilator}
	shows that multiplication by $\ell$ from degree $d$
	to degree $d+1$ is not injective. Therefore, this map
	does not have maximal rank, and $A$ does not have
	the WLP.
\end{proof}

We next consider the case $m+n=16$. The following
coefficient comparison allows us to apply
Lemma~\ref{lem:block-annihilator}.

\begin{lemma}\label{lem:finite-mode-comparison}
	Let $m\geq4$ and $n\geq2$ satisfy $m+n=16$, and write
	\[
	I(T_{m,n};t)=\sum_{k\geq0}h_k(m,n)t^k.
	\]
	Then
	\[
	h_4(m,n)<h_5(m,n).
	\]
\end{lemma}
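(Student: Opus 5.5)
The plan is to use the decomposition from the proof of Theorem~\ref{thm:mode_tadpole}, obtained by applying Proposition~\ref{FormulaforIndependence} at $x_{m-1}$:
\[
I(T_{m,n};t)=I(P_{15};t)+t\,I(P_{m-3};t)\,I(P_n;t).
\]
Writing $p_k(r)=\binom{r-k+1}{k}$ for the coefficients of $I(P_r;t)$, this gives
\[
h_k(m,n)=\binom{16-k}{k}+\sum_{i+j=k-1}p_i(m-3)\,p_j(n).
\]
The path part is independent of $(m,n)$: $h_4$ receives $\binom{12}{4}=495$ and $h_5$ receives $\binom{11}{5}=462$, a deficit of $33$. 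It therefore suffices to show that
\[
\sum_{i+j=4}p_i(m-3)p_j(n)-\sum_{i+j=3}p_i(m-3)p_j(n)>33
\]
for every pair with $m\ge4$, $n\ge2$, $m+n=16$. Set $a=m-3$ and $b=n$. Then $a+b=13$, with $1\le a\le 11$ and $2\le b\le 12$, so there are only eleven cases.

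The cleanest route is simply to tabulate. The coefficients of $I(P_r;t)$ for $r\le12$ are $\binom{r-k+1}{k}$, and the two coefficients of $t^3$ and $t^4$ in the product $I(P_a;t)I(P_b;t)$ can be computed for each of the eleven pairs $(a,b)$. Under the swap $a\leftrightarrow b$ the product polynomial is unchanged, so only $a\in\{1,\dots,6\}$ needs computing, with the case $a=1$ corresponding to $b=12$.

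As a sanity check, take $a=1$, $b=12$. Here $I(P_1;t)=1+t$ and $I(P_{12};t)=1+12t+55t^2+120t^3+126t^4+\cdots$, so the $t^3$ coefficient is $120+55=175$ and the $t^4$ coefficient is $126+120=246$. The difference is $71>33$.

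For balanced pairs such as $a=6$, $b=7$, use $I(P_6;t)=1+6t+10t^2+4t^3$ and $I(P_7;t)=1+7t+15t^2+10t^3+t^4$. The $t^3$ coefficient is $10+6\cdot15+10\cdot7+4=174$, and the $t^4$ coefficient is $1+60+150+28=239$, a difference of $65>33$. The intermediate cases should behave similarly.

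The main obstacle is only bookkeeping: making sure all eleven cases are covered and the inequality is strict in each. I would present the results in a small table listing $h_4(m,n)$ and $h_5(m,n)$ for $m=4,\dots,14$. Alternatively, one could cite Theorem~\ref{thm:mode_tadpole} to place the mode at least at $\lambda_{a}+\lambda_n-1$, but that bound is too weak here, so direct computation is the intended method.
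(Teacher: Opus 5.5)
Your proposal matches the paper's proof. Both use the decomposition at $x_{m-1}$, the coefficient formula $h_k(m,n)=\binom{16-k}{k}+\sum_{i+j=k-1}\binom{m-2-i}{i}\binom{n+1-j}{j}$, and a direct tabulation over $4\le m\le 14$, which the paper records as $(h_4,h_5)=(670,708)$ for $m=4$, $(669,700)$ for $m\in\{5,14\}$, and $(669,701)$ otherwise. The cases you left as ``should behave similarly'' do check out: the product coefficients of $t^3$ and $t^4$ are $174$ and $238$ or $239$. Your separation of the path deficit $33$ and use of the $a\leftrightarrow b$ symmetry are just bookkeeping on the same computation.
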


\begin{proof}
Applying Proposition~\ref{FormulaforIndependence}(i) to the vertex $x_{m-1}$, we obtain
	\[
	I(T_{m,n};t)
	=
	I(P_{15};t)+tI(P_{m-3};t)I(P_n;t).
	\]
By \cite[Proposition~3.1]{NT24b}, one has
	\[
	I(P_r;t)
	=
	\sum_{j=0}^{\lfloor(r+1)/2\rfloor}
	\binom{r-j+1}{j}t^j.
	\]
It follows that
	\[
	h_k(m,n)
	=
	\binom{16-k}{k}
	+
	\sum_{\substack{i,j\geq0\\i+j=k-1}}
	\binom{m-2-i}{i}\binom{n+1-j}{j}.
	\]
We use the convention that $\binom{a}{b}=0$
unless $0\leq b\leq a$.
	
	Since $n=16-m$, the assumptions give $4\leq m\leq14$.
	Evaluating the preceding formula for $k=4,5$ yields
	\[
	\begin{array}{c|c|c}
		m & h_4(m,16-m) & h_5(m,16-m)\\
		\hline
		4 & 670 & 708\\
		5,\ 14 & 669 & 700\\
		6\leq m\leq13 & 669 & 701
	\end{array}
	\]
	Thus $h_4(m,n)<h_5(m,n)$ in every case.
\end{proof}

\begin{proposition}\label{lem:special-sums}
	Assume that $\operatorname{char}(\Bbbk)=0$.
	Let $m\geq4$ and $n\geq2$ satisfy $m+n=16$.
	Then $A(T_{m,n})$ does not have the WLP.
\end{proposition}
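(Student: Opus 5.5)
With $N=m+n=16$ we have $d=\lceil N/4\rceil=4$. The plan is to feed Lemma~\ref{lem:finite-mode-comparison} directly into Lemma~\ref{lem:block-annihilator}.

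First, by Proposition~\ref{Proposition2.5}, it suffices to show that $\ell=x_1+\cdots+x_m+y_1+\cdots+y_n$ is not a Lefschetz element of $A=A(T_{m,n})$. By Lemma~\ref{lem:block-annihilator}, with $N=16=4\cdot 4$ so that $q=4$ and $r=0$, the element
\[
F=\prod_{j=0}^{3}\bigl((z_{4j+1}+z_{4j+2})-(z_{4j+3}+z_{4j+4})\bigr)
\]
is a nonzero element of $[A]_4$ with $\ell F=0$. Hence $\times\ell:[A]_4\to[A]_5$ is not injective.

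Second, since the Hilbert series of $A$ equals $I(T_{m,n};t)$, Lemma~\ref{lem:finite-mode-comparison} gives
\[
\dim_\Bbbk[A]_4=h_4(m,n)<h_5(m,n)=\dim_\Bbbk[A]_5
\]
for every $m\geq4$, $n\geq2$ with $m+n=16$. In this degree, maximal rank therefore means injectivity, and injectivity has just been shown to fail. The final clause of Lemma~\ref{lem:block-annihilator} then shows that $A$ does not have the WLP.

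There is no real obstacle here. The only point to check is that the hypotheses match: the degree $d=4$ in Lemma~\ref{lem:block-annihilator} must agree with the coefficient comparison $h_4<h_5$ in Lemma~\ref{lem:finite-mode-comparison}, and the range $4\le m\le 14$ covered by that table must cover all admissible pairs. Both hold.
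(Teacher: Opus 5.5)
Your proposal is correct and follows the paper's proof exactly: you combine the inequality $h_4(m,n)<h_5(m,n)$ from Lemma~\ref{lem:finite-mode-comparison} with the nonzero element of degree $\lceil 16/4\rceil=4$ annihilated by $\ell$ from Lemma~\ref{lem:block-annihilator}, and invoke Proposition~\ref{Proposition2.5}. Your explicit check that $q=4$, $r=0$ and that $4\le m\le 14$ covers all admissible pairs is a faithful unpacking of the same argument.
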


\begin{proof}
	Set $A=A(T_{m,n})$, and let $\ell$ be the sum of all
	the variables. Since the Hilbert series of $A$ equals
	$I(T_{m,n};t)$, Lemma~\ref{lem:finite-mode-comparison}
	gives
	\[
	\dim_{\Bbbk}[A]_4<\dim_{\Bbbk}[A]_5.
	\]
	On the other hand,
	\[
	\left\lceil\frac{m+n}{4}\right\rceil=4.
	\]
	Therefore, Lemma~\ref{lem:block-annihilator} shows that
	\[
	\times\ell:[A]_4\longrightarrow[A]_5
	\]
	is not injective. Hence this map does not have maximal
	rank. By Proposition~\ref{Proposition2.5}, the algebra
	$A$ does not have the WLP.
\end{proof}

\begin{table}[!ht]
	\centering
	\caption{The weak Lefschetz property of $A(T_{m,n})$}
	\label{tab:Tadpole_WLP}
	
	\newcommand{\wlpYes}{\textcolor{blue}{\ensuremath{\checkmark}}}
	\newcommand{\wlpNo}{\textcolor{blue}{\ensuremath{\times}}}
	\newcommand{\wlpLarge}{%
		\textcolor{black}{\ensuremath{\times}}}
		\newcommand{\wlpSixteen}{%
			\textcolor{red}{\ensuremath{\times}}}
	
	\begingroup
	\setlength{\tabcolsep}{5pt}
	\renewcommand{\arraystretch}{1.15}
	\begin{tabular}{|c|*{15}{c|}}
		\hline
		\diagbox{$m$}{$n$}
		& 2 & 3 & 4 & 5 & 6 & 7 & 8 & 9
		& 10 & 11 & 12 & 13 & 14 & 15 & 16\\
		\hline
		4
		& \wlpYes & \wlpYes & \wlpYes & \wlpYes & \wlpYes
		& \wlpYes & \wlpNo & \wlpYes & \wlpYes & \wlpNo
		& \wlpSixteen & \wlpYes & \wlpLarge & \wlpLarge & \wlpLarge\\
		\hline
		5
		& \wlpYes & \wlpYes & \wlpNo & \wlpYes & \wlpYes
		& \wlpNo & \wlpNo & \wlpYes & \wlpNo & \wlpSixteen
		& \wlpNo & \wlpLarge & \wlpLarge & \wlpLarge & \wlpLarge\\
		\hline
		6
		& \wlpNo & \wlpYes & \wlpYes & \wlpNo & \wlpNo
		& \wlpYes & \wlpNo & \wlpNo & \wlpSixteen & \wlpNo
		& \wlpLarge & \wlpLarge & \wlpLarge & \wlpLarge & \wlpLarge\\
		\hline
		7
		& \wlpYes & \wlpYes & \wlpYes & \wlpNo & \wlpNo
		& \wlpYes & \wlpNo & \wlpSixteen & \wlpNo & \wlpLarge
		& \wlpLarge & \wlpLarge & \wlpLarge & \wlpLarge & \wlpLarge\\
		\hline
		8
		& \wlpYes & \wlpYes & \wlpNo & \wlpYes & \wlpYes
		& \wlpNo & \wlpSixteen & \wlpYes & \wlpLarge & \wlpLarge
		& \wlpLarge & \wlpLarge & \wlpLarge & \wlpLarge & \wlpLarge\\
		\hline
		9
		& \wlpNo & \wlpNo & \wlpYes & \wlpNo & \wlpNo
		& \wlpSixteen & \wlpNo & \wlpLarge & \wlpLarge & \wlpLarge
		& \wlpLarge & \wlpLarge & \wlpLarge & \wlpLarge & \wlpLarge\\
		\hline
		10
		& \wlpNo & \wlpYes & \wlpYes & \wlpNo & \wlpSixteen
		& \wlpYes & \wlpLarge & \wlpLarge & \wlpLarge & \wlpLarge
		& \wlpLarge & \wlpLarge & \wlpLarge & \wlpLarge & \wlpLarge\\
		\hline
		11
		& \wlpYes & \wlpYes & \wlpNo & \wlpSixteen & \wlpYes
		& \wlpLarge & \wlpLarge & \wlpLarge & \wlpLarge & \wlpLarge
		& \wlpLarge & \wlpLarge & \wlpLarge & \wlpLarge & \wlpLarge\\
		\hline
		12
		& \wlpNo & \wlpNo & \wlpSixteen & \wlpNo & \wlpLarge
		& \wlpLarge & \wlpLarge & \wlpLarge & \wlpLarge & \wlpLarge
		& \wlpLarge & \wlpLarge & \wlpLarge & \wlpLarge & \wlpLarge\\
		\hline
		13
		& \wlpNo & \wlpSixteen & \wlpYes & \wlpLarge & \wlpLarge
		& \wlpLarge & \wlpLarge & \wlpLarge & \wlpLarge & \wlpLarge
		& \wlpLarge & \wlpLarge & \wlpLarge & \wlpLarge & \wlpLarge\\
		\hline
		14
		& \wlpSixteen & \wlpYes & \wlpLarge & \wlpLarge & \wlpLarge
		& \wlpLarge & \wlpLarge & \wlpLarge & \wlpLarge & \wlpLarge
		& \wlpLarge & \wlpLarge & \wlpLarge & \wlpLarge & \wlpLarge\\
		\hline
		15
		& \wlpNo & \wlpLarge & \wlpLarge & \wlpLarge & \wlpLarge
		& \wlpLarge & \wlpLarge & \wlpLarge & \wlpLarge & \wlpLarge
		& \wlpLarge & \wlpLarge & \wlpLarge & \wlpLarge & \wlpLarge\\
		\hline
		16
		& \wlpLarge & \wlpLarge & \wlpLarge & \wlpLarge & \wlpLarge
		& \wlpLarge & \wlpLarge & \wlpLarge & \wlpLarge & \wlpLarge
		& \wlpLarge & \wlpLarge & \wlpLarge & \wlpLarge & \wlpLarge\\
		\hline
		17
		& \wlpLarge & \wlpLarge & \wlpLarge & \wlpLarge & \wlpLarge
		& \wlpLarge & \wlpLarge & \wlpLarge & \wlpLarge & \wlpLarge
		& \wlpLarge & \wlpLarge & \wlpLarge & \wlpLarge & \wlpLarge\\
		\hline
	\end{tabular}
	\endgroup
	
	\medskip
	\begin{minipage}{\textwidth}
		\small
		\raggedright
		
		\hspace*{1.5cm}\wlpYes:
		$A(T_{m,n})$ has the WLP by
		Proposition~\ref{pro:remaining-computational-cases}.
		\par\smallskip
		
		\hspace*{1.5cm}\wlpNo:
		$A(T_{m,n})$ fails the WLP by
		Proposition~\ref{pro:remaining-computational-cases}.
		\par\smallskip
		
		\hspace*{1.5cm}\wlpLarge:
		$A(T_{m,n})$ fails the WLP for $m+n\geq18$ by
		Proposition~\ref{pro:large-total-size}.
		\par\smallskip
		
		\hspace*{1.5cm}\wlpSixteen:
		$A(T_{m,n})$ fails the WLP for $m+n=16$ by
		Proposition~\ref{lem:special-sums}.
		
	\end{minipage}
\end{table}
The preceding results reduce the classification to the pairs in the
finite range $4\leq m\leq17$, $2\leq n\leq16$ satisfying
$m+n\leq15$ or $m+n=17$. We determine these finitely many cases through computations in
\texttt{Macaulay2}~\cite{GS}; the code is available
in~\cite{HPS26M2}.

\begin{proposition}\label{pro:remaining-computational-cases}
Assume that $4\leq m\leq17$, $2\leq n\leq16$, and either
$m+n\leq15$ or $m+n=17$. Then $A(T_{m,n})$ has the WLP if and only if
\begin{align*}
(m,n)\in{}&
\{4\}\times\{2,3,4,5,6,7,9,10,13\}\\
&\cup\{5,8\}\times\{2,3,5,6,9\}\\
&\cup\{6,10\}\times\{3,4,7\}\\
&\cup\{7\}\times\{2,3,4,7\}\\
&\cup\{(9,4),(11,2),(11,3),(11,6),(13,4),(14,3)\}.
\end{align*}
\end{proposition}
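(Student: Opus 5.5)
This is a finite verification, and I will carry it out computationally, as the paper announces, organized so that each case reduces to a rank computation over $\mathbb{Q}$. By Proposition~\ref{Proposition2.5}, $A(T_{m,n})$ has the WLP if and only if $\ell=x_1+\cdots+x_m+y_1+\cdots+y_n$ is a Lefschetz element. So for each admissible pair $(m,n)$ it suffices to decide, for every degree $i$, whether $\times\ell:[A]_i\to[A]_{i+1}$ has maximal rank. Since the ideal is monomial and defined over $\mathbb{Z}$, the matrix of $\times\ell$ in the monomial bases has entries in $\{0,1\}$. Working over $\mathbb{Q}$ therefore computes the rank over any field of characteristic zero. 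The monomial basis of $[A]_i$ is exactly the set of $i$-independent sets of $T_{m,n}$. The matrix sends an independent set $S$ to the sum of all $S\cup\{v\}$ that are independent.

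Before any computation I will prune the degrees using the unimodality from Corollary~\ref{cor:unimodality_Tadpoles}. If $\mu$ is the mode, maximal rank means injectivity for $i<\mu$ and surjectivity for $i\ge\mu$. This holds with the convention that at a plateau $h_i=h_{i+1}$ the map must be bijective. It is also known that injectivity in the degree just before the mode and surjectivity at the mode propagate in the appropriate directions for quadratic monomial algebras of this kind. I would not rely on this, however, and will simply check every degree $0\le i<\alpha(T_{m,n})$. The sizes are small: $m+n\le 17$ gives at most about $2\,600$ monomials in any degree.

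For the failing cases I will also try to exhibit a structural reason, as a cross-check on the computer output. Examples are the kernel element of Lemma~\ref{lem:block-annihilator} whenever $h_d\le h_{d+1}$ with $d=\lceil (m+n)/4\rceil$, and the path quotient of Lemma~\ref{lem:failure_transfer} via $A\twoheadrightarrow A(P_{m+n-1})$. For instance, $m+n=17$ gives $d=5$. Here Lemma~\ref{lem:block-annihilator} and the bound $\mu\ge\lambda_{m-3}+\lambda_n-1$ should explain several of the entries marked $\times$ in Table~\ref{tab:Tadpole_WLP}. For the pairs where the WLP holds, the only certificate is the full-rank computation, which I will record degree by degree.

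Concretely, I will loop over $4\le m\le 17$ and $2\le n\le 16$ with $m+n\le 15$ or $m+n=17$. For each pair I will build $A=\mathbb{Q}[x,y]/(x_i^2,y_j^2)+I(T_{m,n})$ in \texttt{Macaulay2}, form $\ell$, and compare the rank of each graded piece of $\times\ell$ with $\min(h_i,h_{i+1})$. I will then collect the pairs where every degree has maximal rank and compare them with the displayed set. The main obstacle is not conceptual but ensuring exactness and completeness. The rank must be computed exactly over $\mathbb{Q}$, not modulo a prime, since small primes can genuinely lower ranks. No degree or pair may be skipped. The borderline cases $m+n=17$, where the matrices are largest, deserve the most care, and I would double-check them by computing the ranks a second time, using the explicit independent-set matrices rather than the quotient ring.
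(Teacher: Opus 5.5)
Your plan is the paper's proof. You reduce to $\ell$ via Proposition~\ref{Proposition2.5}, compute the rank of $\times\ell$ in every degree over $\mathbb{Q}$ in \texttt{Macaulay2}, and use that the matrices are integral, so the ranks agree over every field of characteristic zero. Your side remarks are harmless only because you do not rely on them: downward propagation of injectivity is not automatic here, since the socle of $A(T_{m,n})$ is spanned by maximal independent sets of several sizes, and for $m+n=17$ the mode is $5=\lceil 17/4\rceil$ with $h_5>h_6$, so Lemma~\ref{lem:block-annihilator} gives no obstruction there.
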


\begin{proof}
By Proposition~\ref{Proposition2.5}, it suffices to consider
multiplication by the sum $\ell$ of all the variables. For each pair
$(m,n)$ in the stated range, we compute the matrices of
\begin{equation*}
\times\ell:
[A(T_{m,n})]_j\longrightarrow[A(T_{m,n})]_{j+1}
\end{equation*}
in every degree $j$ and determine their ranks over $\mathbb{Q}$.
Computations in \texttt{Macaulay2}~\cite{GS}, using the code
in~\cite{HPS26M2}, show that these maps have maximal rank exactly for
the pairs listed in the statement and summarized in
Table~\ref{tab:Tadpole_WLP}.
The matrices have integer entries, so their ranks over an arbitrary
field of characteristic zero agree with their ranks over
$\mathbb{Q}$. Therefore, the same classification holds over every
field of characteristic zero.
\end{proof}
We are now ready to complete the proof of the main theorem.

\begin{proof}[Proof of Theorem~\ref{thm:mainTheorem_WLP}]
	Proposition~\ref{pro:WLP_small_m_and_n} gives the
	classification when $m=3$ or $n=1$. We may therefore
	assume that $m\geq4$ and $n\geq2$.
	
	If $m+n\geq18$, then
	Proposition~\ref{pro:large-total-size} shows that
	$A(T_{m,n})$ does not have the WLP.
	The case $m+n=16$ is excluded by
	Proposition~\ref{lem:special-sums}.
	
	It remains to consider $m+n\leq15$ or $m+n=17$.
	These pairs lie in the range of
	Proposition~\ref{pro:remaining-computational-cases},
	which gives their complete classification.
	
	Combining these cases with those in
	Proposition~\ref{pro:WLP_small_m_and_n} yields precisely
	the pairs listed in the statement.
\end{proof}

\section*{Acknowledgments}
Part of this work was done while the first author was visiting the Vietnam Institute for Advanced Study in Mathematics (VIASM), and he would like to thank the VIASM for hospitality and financial support.


\end{document}